
\documentclass[11pt]{amsart}
\usepackage[latin1]{inputenc}
\usepackage{amsmath}
\usepackage{amsfonts}
\usepackage{amssymb}
\usepackage{graphicx}
\usepackage{fourier}
\usepackage{hyperref}
\usepackage{enumerate}
\usepackage{esint}
\usepackage{bm}
\usepackage{ulem}
\usepackage{xcolor} 
\usepackage {ifthen}
\usepackage{verbatim}         

\newtheorem{theorem}{Theorem}[section]
\newtheorem{lemma}[theorem]{Lemma}

\theoremstyle{definition}

\newtheorem{corollary}[theorem]{Corollary}
\theoremstyle{remark}
\newtheorem{remark}[theorem]{Remark}

\newcommand{\R}{\mathbb{R}}
\newcommand{\CC}{\mathbb{C}}

\newcommand{\N}{\mathbb{N}}

\renewcommand{\H}{\mathcal{H}}
\newcommand{\AAA}{\mathcal{A}}
\newcommand{\I}{\mathcal{I}}
\newcommand{\F}{\mathcal{F}}

\numberwithin{equation}{section}

\newcommand{\eps}{\varepsilon}


\definecolor{bostonuniversityred}{rgb}{0.8, 0.0, 0.0}
\newcommand{\amit}{\textcolor{black}} 
\definecolor{byzantium}{rgb}{0.44, 0.16, 0.39}


\newcommand{\SM}{\mathcal{S}^{>0}}
\newcommand{\SMU}{\mathcal{S}^{\geq 0}}

\newcommand{\rank}{\operatorname{rank}}
\newcommand{\diag}{\operatorname{diag}}
\newcommand{\tr}{\operatorname{Tr}}
\renewcommand{\span}{\operatorname{span}}

\renewcommand{\Re}{\operatorname{Re}}

\renewcommand{\det}{\operatorname{det}}
\newcommand{\IK}{{\mathcal{I}(K)}}
\newcommand{\bea}{\textcolor{black}}
\title[Optimal non-symmetric Fokker-Planck equation]{Optimal non-symmetric Fokker-Planck equation for the convergence to a given equilibrium}
\author{Anton Arnold, Beatrice Signorello}
\address{Vienna University of Technology, Institute of Analysis and Scientific Computing, Wiedner Hauptstr. 8-10, A-1040 Wien, Austria}
\email{anton.arnold@tuwien.ac.at; beatrice.signorello@tuwien.ac.at}

\date{\today}
\begin{document}

\begin{abstract}
This paper is concerned with finding Fokker-Planck equations in $\R^d$ with the fastest exponential decay towards a given equilibrium. For a prescribed, anisotropic Gaussian we determine a non-symmetric Fokker-Planck equation with linear drift that shows the highest exponential decay rate for the convergence of its solutions towards equilibrium. At the same time it has to allow for a decay estimate with a multiplicative constant arbitrarily close to its infimum. 

Such an ``optimal'' Fokker-Planck equation is constructed explicitly with a diffusion matrix of rank one, hence being hypocoercive. 
In an $L^2$--analysis, we find that the maximum decay rate equals the maximum eigenvalue of the inverse covariance matrix, and that the infimum of the attainable multiplicative constant is 1, corresponding to the high-rotational limit in the Fokker-Planck drift. 
\bea{This analysis is complemented with numerical illustrations in 2D, and it includes a case study for time-dependent coefficient matrices.  
}
\end{abstract}

\maketitle

\noindent
{\tiny
{KEYWORDS:}
Fokker-Planck equation, fastest decay, non-symmetric perturbation, hypocoercivity}
\\{\tiny{MSC 2010:}
35Q84, 
35B40, 
35Q82, 
82C31 
}

\section{Introduction}\label{sec:intro}

The starting point of this paper is a linear, symmetric (or ``reversible'') Fokker-Planck (FP) equation on $\R^d$, \bea{$d\ge2$} with a corresponding, typically anisotropic Gaussian steady state. It is known from the literature (see \cite{LNP, GM}, e.g.) that the convergence to equilibrium can be accelerated by adding to the FP-equation non-symmetric perturbations that do not alter the equilibrium. It is hence a natural goal to find the ``optimal perturbation'' (in a sense to be made precise) such that the corresponding solutions converge the fastest to the fixed steady state. For FP-equations with \amit{fixed or variable diffusion matrices, this problem was studied, respectively,} in \cite{LNP} and \cite{GM}. A closely related question for the (kinetic) 1D Goldstein-Taylor system was recently studied in \cite{DE}: For a fixed (anti-symmetric) transport operator, the authors found the best (symmetric) relaxation operator, yielding the fastest exponential decay to equilibrium. 
\bea{For the same model, but with constant-in-$x$ relaxation, the propagator norm was previously computed in \cite{MM}.}\\

While we shall analyze this problem here in a pure PDE context, the origin of the question comes from a statistical and probabilistic setting: Let a given potential $V:\R^d \rightarrow \R$ satisfy $\int_{\R^d} e^{-V(x)} dx=1$, and define the probability density function
\begin{equation}
\label{eq:finfty}
f_{\infty, V}(x):=e^{-V(x)}.
\end{equation}
To compute expectations with respect to the associated probability measure $\mu_{V}$, e.g.\ via a Markov chain Monte Carlo algorithm (see \cite{D}), one needs to construct an ergodic Markov process $\left (X_t \right )_{t \geq 0}$ with the unique invariant law $\mu_{V}$, i.e.
\begin{equation}
\label{eq:Xtconvergence}
\operatorname{law}{(X_t)} \rightarrow \mu_{V}, \qquad \mbox{as }\: t \rightarrow \infty.
\end{equation}
The efficiency of such algorithms can be measured by the speed of convergence in \eqref{eq:Xtconvergence}. This motivates to pursue the following objective: find the fastest among all possible processes that sample from the same equilibrium $\mu_{V}$. 
A classical way to sample from $\mu_{V}$ is to consider 
a standard Brownian motion with drift $-\nabla V$.

The probability density function $f_t$ of the process $X_t$ at time $t$ then  solves the Fokker-Planck equation
\begin{equation}
\label{eq:FPsymmetric}
\partial_t f_t= \operatorname{div}_x( \nabla_x f_t +\nabla_x V f_t )=:-Lf_t, \quad t>0.
\end{equation}
It is symmetric in the sense that its generator $L$ is symmetric in the Hilbert space 
$$\mathcal{H}:=L^2(\R^d, f_{\infty, V}^{-1})= \Big\{f:\R^d \rightarrow \R, \ \text{measurable s.t.} \int_{\R^d} f(x)^2 f^{-1}_{\infty,V}(x) dx < \infty \Big\}.
$$ 
Under appropriate assumptions on the potential $V$ (e.g. if $\frac{1}{2}|\nabla V(x)|^2-\Delta V(x) \rightarrow + \infty$ for $|x| \rightarrow \infty$, see \cite{Vi09}, A.19) it is possible to show that $f_t$ converges to the unique equilibrium $f_{\infty,V}$. Moreover, $L$ is coercive in $V_0^{\perp}$ with $V_0:=\mathrm{span}_{\R}{\{f_{\infty,V}\}} \subset \mathcal{H}$, i.e. $\exists \lambda>0$ such that
\begin{equation} 
\label{eq:coervlam}
 \langle L f, f \rangle_{\H}  \geq \lambda ||f||_{\H}^2, \qquad f \in V_0^{\perp}.
\end{equation} 
We shall assume in the sequel that this $\lambda$ is chosen as large as possible, i.e.\ as the spectral gap of $L$.
As a consequence, if $f_t$ is a solution of \eqref{eq:FPsymmetric}, then 
\begin{equation}
\label{eq:coercRevFP}
||f_t-f_{\infty,V}||_{\mathcal{H}}\leq e^{-\lambda t} ||f_0-f_{\infty,V}||_{\mathcal{H}}, 
\end{equation}
for any normalized initial condition $f_0 \in \mathcal{H}$ \amit{(see Proposition 9 in \cite{Vi09}).}
So we have a purely exponential convergence estimate. 

We shall discuss in the next section that it is often possible to improve the rate of convergence towards $f_{\infty,V}$ by adding a non-reversible perturbation in \eqref{eq:FPsymmetric} while preserving the steady state $f_{\infty,V}$ (as done in \cite{GM} and \cite{LNP}). 
As a first step we consider 
the non-symmetric FP-equation
\begin{equation}
\label{eq:PDEwithB}
\partial_t f_t=\operatorname{div}_x{ (  \nabla_x f_t   + (\nabla_x V +b)f_t )}=:-L_b f_t,
\end{equation}
with $b=b(x)$ such that $\operatorname{div}_x{(b e^{-V})}=0$, to keep the steady state condition \linebreak
$L_{b}f_{\infty,V}=0$ still valid. 
In this paper we will only consider Fokker-Planck equations with linear drift, just as in \cite{LNP, GM}. This   corresponds to 
quadratic potentials
\begin{equation}
\label{eq:linearcase}
V(x)=\frac{x^T K^{-1} x}{2}, \ K \in \SM 
\end{equation} 
and linear perturbations of the form 
$b=Ax, \ A \in \mathcal{M}$.\\

{\bf Notation:}
Here and in the sequel we denote with $\mathcal{M}$ the set of real $d\times d$ matrices, $\SM$ (resp. $\SMU$)
the set of positive definite (resp. positive semi-definite) symmetric matrices, and with $\mathcal{A}$ the set of anti-symmetric matrices. The spectrum of $A \in \mathcal{M}$ is denoted by $\sigma(A)$. 
\bea{For a symmetric matrix $A$, $\lambda_{min}(A)$ and $\lambda_{max}(A)$ denote, respectively, its smallest and largest eigenvalue.}\\

The following lemma (Lemma 1 in \cite{LNP}) characterizes explicitly the admissible perturbations in \eqref{eq:PDEwithB}.
\begin{lemma}
Let $V(x)$ be given by \eqref{eq:linearcase} and let $b(x)=Ax$ where $A \in \mathcal{M}$. Then
\begin{equation}
\operatorname{div}_x{(be^{-V})}=0 \text{ if and only if } A=JK^{-1} \text{ with some }\ J \in \mathcal{A}.
\end{equation}
\end{lemma}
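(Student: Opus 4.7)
The plan is to expand the divergence by the product rule and then read off, coefficient by coefficient in the resulting polynomial in $x$, what the vanishing condition says about $A$. Since $V(x)=x^TK^{-1}x/2$, we have $\nabla_x e^{-V(x)} = -K^{-1}x \, e^{-V(x)}$. Combining this with $\operatorname{div}_x(Ax) = \tr(A)$, I would compute
\begin{equation*}
\operatorname{div}_x(Ax\, e^{-V(x)}) = e^{-V(x)}\bigl[\tr(A) - (Ax)^T K^{-1} x\bigr] = e^{-V(x)}\bigl[\tr(A) - x^T A^T K^{-1} x\bigr].
\end{equation*}
Since $e^{-V}>0$, the condition $\operatorname{div}_x(be^{-V})\equiv 0$ on $\R^d$ is equivalent to the polynomial identity $\tr(A) - x^T A^T K^{-1} x = 0$ for every $x\in\R^d$. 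Matching the constant and quadratic parts separately gives the two conditions $\tr(A)=0$ and $x^T A^T K^{-1} x = 0$ for all $x$, i.e.\ the symmetric part of $A^T K^{-1}$ vanishes, which is to say $A^T K^{-1} \in \mathcal{A}$.

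Next I would make the substitution $J := AK$, so $A = JK^{-1}$, and translate the antisymmetry of $A^T K^{-1}$ into a condition on $J$. Using that $K$ (and hence $K^{-1}$) is symmetric,
\begin{equation*}
A^T K^{-1} = (JK^{-1})^T K^{-1} = K^{-1} J^T K^{-1},
\end{equation*}
and this matrix is antisymmetric iff $K^{-1} J^T K^{-1} = -K^{-1} J K^{-1}$ iff $J^T = -J$, by conjugation with the invertible symmetric matrix $K^{-1}$. So the quadratic-form condition is exactly $J\in\mathcal{A}$.

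Finally I would verify that the trace condition $\tr(A)=0$ is automatically implied by $J\in\mathcal{A}$, so no extra restriction arises. Indeed, setting $M:=JK^{-1}=A$, cyclicity of the trace and $J^T=-J$ give
\begin{equation*}
\tr(A) = \tr(JK^{-1}) = \tr(K^{-1}J) = \tr\bigl((JK^{-1})^T\bigr) - 2\tr(K^{-1}J) \cdot 0,
\end{equation*}
or more cleanly, $\tr(M)=\tr(M^T) = \tr(K^{-1}J^T) = -\tr(K^{-1}J) = -\tr(JK^{-1}) = -\tr(M)$, hence $\tr(A)=0$. Conversely, if $A=JK^{-1}$ with $J\in\mathcal{A}$, the same algebraic steps run backwards and show that the bracket in the divergence formula is identically zero. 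The only subtlety in the argument is keeping track that the vanishing of a polynomial in $x$ forces the vanishing of its homogeneous components, and that antisymmetry is preserved under congruence by the symmetric factor $K^{-1}$; both are routine, so I do not expect a real obstacle.
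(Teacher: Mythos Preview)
Your argument is correct. The paper does not actually prove this lemma; it merely quotes it as Lemma~1 of \cite{LNP}, so there is no ``paper's own proof'' to compare against. Your computation is the standard one: expand the divergence, match the constant and quadratic parts of the polynomial identity, and observe that the substitution $J=AK$ turns the antisymmetry of $A^TK^{-1}$ into $J\in\mathcal{A}$ via congruence by the invertible symmetric matrix $K^{-1}$. The only cosmetic issue is the first displayed line in your trace verification, which is garbled; simply delete it and keep the ``more cleanly'' version, which is already a complete and correct proof that $\tr(JK^{-1})=0$ whenever $J$ is antisymmetric and $K$ is symmetric.
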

Then the non-symmetric Fokker-Planck equation \eqref{eq:PDEwithB} becomes 
\begin{equation}
\label{eq:FPwith J}
\partial_t f_t = \operatorname{div}_x{( \nabla_x f_t  +(I_d+J)K^{-1} xf_t )}, 
\end{equation}
where $I_d$ denotes the identity matrix in $\mathcal{M}$, $J \in \mathcal{A}$ is arbitrary, and $f_{\infty, V}$ is still a steady state.

Note that \eqref{eq:FPwith J} still satisfies \eqref{eq:coercRevFP} with the same rate $\lambda$ (see \S2.4 in \cite{AMTU}), but $\lambda$ may be smaller than the spectral gap of $L_b$. However, the sharp decay rate can be recovered by hypocoercivity tools \cite{Vi09, AE}: Then one finds constants $\tilde{\lambda}>0$ and $c\geq 1$ (depending on the fixed potential $V$ and the matrix $J$) such that
\begin{equation}
\label{eq:hypowithJ}
||f_t-f_{\infty, V}||_{\mathcal{H}} \leq c e^{-\tilde{\lambda}t} ||f_0-f_{\infty, V}||_{\mathcal{H}}, \quad \forall t \geq 0.
\end{equation}
For the reversible FP-dynamics (with $b=0$) the maximal decay rate $\tilde{\lambda}$ in the estimate \eqref{eq:hypowithJ} is $\lambda$, the biggest coercivity constant in the inequality \eqref{eq:coervlam}. In this case, the multiplicative constant $c=1$. The advantage of adding a non-reversible perturbation $b$ is to possibly obtain a larger decay rate $\tilde{\lambda}>\lambda$, at the price of allowing for a multiplicative constant $c>1$. In fact, the decay rate may be improved iff $K$ is not a multiple of $I_d$, see \S3.2, \cite{LNP}.

The question discussed in \cite{LNP} is the following: Given the potential \eqref{eq:linearcase}, which is the optimal non-reversible linear FP-equation of the form \eqref{eq:FPwith J} \bea{(and with {\it time-independent} coefficients)} such that its solutions converge to $f_{\infty, V}$ with largest decay rate? For the diffusion matrix fixed as $I_d$, as in \eqref{eq:FPwith J}, the authors give a complete answer in \cite{LNP}, Theorem 1. But if one generalizes the question, allowing to vary both $b$ and the diffusion matrix, as done in \cite{GM}, the best decay rate from \cite{LNP} can be improved further. \bea{Finally, one can extend the question further and analyze if {\it time-dependent} coefficients can enhance the decay of linear FP-equations even more.}

\bea{Let us put this paper more into context with the literature on entropy methods and hypocoercivity: The main goal of \cite{AAC, AAS, AMTU, ASS, AE, Vi09} is to find explicit and sometimes even optimal decay rates for {\it a given evolution equation}. 
By contrast, the novelty in \cite{LNP, GM} and here is to {\it fix an equilibrium density} and then to {\it seek the evolution equation} (within a certain class) that yields the fastest convergence towards the equilibrium.}

This paper is organized as follows: 
In the next section we formulate this optimization problem and review the results from \cite{GM}. In \S\ref{sec3} we present the main result: As the biggest improvement compared to \cite{GM}, we shall be able to obtain multiplicative constants in \eqref{eq:hypowithJ} that are arbitrarily close to 1. 
\bea{In \S\ref{sec:4.1}, \S\ref{sec:4.2} we will elucidate this result on 2D examples, giving sharp decay estimates and numerical illustrations. Moreover, we identify the non-symmetric perturbation of the FP-equation as a highly rotating drift term. Then, in \S\ref{sec:4.3} we discuss the issue of using time-dependent coefficient matrices to accelerate the decay behavior, mostly focussing on a numerical case study in 2D. Finally, we conclude in \S\ref{sec:5}.
}

\section{Formulation of the optimization problem and existing results}\label{sec2}
Let $K\in \SM$ be given.
We define the (typically) anisotropic Gaussian 
\begin{equation}
\label{eq:finf}
f_{\infty, K}(x):= \frac{\det(K)^{-1/2}}{(2 \pi)^{d/2} }\mathrm{exp}{\left(-\frac{x^TK^{-1}x}{2}\right)}, \qquad x \in \R^d,
\end{equation}
and the linear Fokker-Planck equation
\begin{equation}
\label{eq:FP}
 \partial_{t} f_t = -L_{C,D}f_t := \mathrm{div}_{x}{(D\nabla_x f_t+Cxf_t)}, \qquad x \in \R^d, \ t \in (0,\infty),
\end{equation}
for arbitrary $x$-independent matrices $D \in \SMU$ and $C \in \mathcal{M}$.
Equation \eqref{eq:FP} is a generalization of the non-reversible \eqref{eq:FPwith J}, possibly with a degenerate (i.e. singular) diffusion matrix $D$. 
Moreover we define the set
\begin{equation}
\mathcal{I}(K):= \{ (C,D) \in \mathcal{M} \times \SMU, \  \mathrm{Tr}{(D)} \leq d : L_{C,D}f_{\infty, K}=0 \}.
\end{equation}
The next lemma (Lemma 3.1 in \cite{GM}; for $D=I_d$ also Lemma 1 in \cite{LNP}) gives a characterization of the pairs $(C,D)$ in $\mathcal{I}(K)$.
\begin{lemma}
\label{lem:IKchar}
For $K \in \SM$ fixed, the following two statements are equivalent:
\begin{itemize}
\item $(C,D) \in \mathcal{I}(K)$;
\item $D \in \SMU, \mathrm{Tr}{(D)} \leq d,$ and $\exists J \in \mathcal{A}$ such that $C=(D+J)K^{-1}.$
\end{itemize}
\end{lemma}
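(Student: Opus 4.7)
\medskip

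\noindent\textbf{Proof plan.} The plan is to reduce the steady-state PDE condition $L_{C,D} f_{\infty,K} = 0$ to a purely algebraic condition on the matrix $M := C - DK^{-1}$, and then translate this into the statement of the lemma.

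First I would compute $\nabla_x f_{\infty,K} = -K^{-1}x\, f_{\infty,K}(x)$, so that
\begin{equation*}
D\nabla_x f_t + C x f_t \Big|_{f_t = f_{\infty,K}} = (C - DK^{-1})x\, f_{\infty,K}(x) = Mx\, f_{\infty,K}(x).
\end{equation*}
Taking the divergence and using the product rule, the equation $L_{C,D}f_{\infty,K}=0$ becomes
\begin{equation*}
\big[\tr(M) - x^T K^{-1} M x\big]\, f_{\infty,K}(x) = 0 \qquad \forall\, x \in \R^d.
\end{equation*}
Since $f_{\infty,K}$ is strictly positive and $x^T K^{-1} M x$ depends only on the symmetric part of $K^{-1}M$, this is equivalent to the two algebraic conditions $\tr(M)=0$ and $(K^{-1}M)_{\mathrm{sym}} = 0$, i.e.\ $K^{-1}M \in \mathcal{A}$. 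Note that antisymmetry of $K^{-1}M$ already forces $\tr(K^{-1}M)=0$; combined with the identity $\tr(M) = \tr(K\cdot K^{-1}M)$ this is in fact automatic only when $K$ itself is scalar, so I would record the trace-free condition as a byproduct rather than an independent requirement (it will follow from the antisymmetry once we rewrite things correctly).

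Next I would convert the condition $K^{-1}M \in \mathcal{A}$ into the asserted form. Setting $J := MK$, antisymmetry of $K^{-1}M$ gives
\begin{equation*}
J + J^T = MK + KM^T = K\big[(K^{-1}M)+(K^{-1}M)^T\big]K = 0,
\end{equation*}
so $J \in \mathcal{A}$ and $M = JK^{-1}$, which rearranges to $C = (D+J)K^{-1}$. Conversely, if $C = (D+J)K^{-1}$ with $J \in \mathcal{A}$, then $K^{-1}M = K^{-1}JK^{-1}$, whose transpose is $K^{-1}J^T K^{-1} = -K^{-1}JK^{-1}$, confirming the antisymmetry and hence $L_{C,D}f_{\infty,K}=0$. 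The constraints $D \in \SMU$ and $\tr(D)\le d$ are carried along on both sides simply because they appear in the very definition of $\mathcal{I}(K)$.

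The main (minor) point requiring care is the passage from the pointwise identity $\tr(M) - x^T K^{-1}Mx = 0$ to the two matrix conditions; this uses that a quadratic form plus a constant vanishes identically on $\R^d$ iff both vanish, and that $x^T K^{-1}Mx$ only sees the symmetric part of $K^{-1}M$. There is no genuine analytic obstacle; the lemma is essentially a direct computation once the identity $\nabla_x f_{\infty,K} = -K^{-1} x f_{\infty,K}$ is used.
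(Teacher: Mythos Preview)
The paper does not give its own proof of this lemma; it simply cites Lemma~3.1 in \cite{GM} (and, for $D=I_d$, Lemma~1 in \cite{LNP}). Your direct computation of $L_{C,D}f_{\infty,K}$ is a correct and self-contained argument, and it is the natural one.

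One small point deserves cleaning up. You hedge on whether $\tr(M)=0$ follows from the antisymmetry of $K^{-1}M$, writing that this is ``automatic only when $K$ itself is scalar.'' In fact it is always automatic: if $A:=K^{-1}M$ is antisymmetric and $K$ is symmetric, then
\[
\tr(M)=\tr(KA)=\tr\big((KA)^T\big)=\tr(A^TK)=-\tr(AK)=-\tr(KA),
\]
so $\tr(KA)=0$. Thus the vanishing of the constant term in the polynomial identity $\tr(M)-x^TK^{-1}Mx\equiv 0$ is a consequence of the vanishing of the quadratic part, and the single condition $K^{-1}M\in\mathcal{A}$ (equivalently $CK-D\in\mathcal{A}$) captures the steady-state requirement completely. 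With this clarification your argument is complete.
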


In other words, for $K \in \SM$ given, we have
\begin{equation}
\label{eq:IK2}
\mathcal{I}(K)= \{ ((D+J)K^{-1}, D) \in \mathcal{M} \times \SMU : \ J \in \mathcal{A}, \mathrm{Tr}{(D)} \leq d \},
\end{equation}
and $\mathcal{I}(K)$ is not empty. 

Given a fixed covariance matrix $K \in \SM$ (and hence the fixed Gaussian $f_{{\infty},K})$, the set $\mathcal{I}(K)$ represents the matrix pairs $(C,D)$ such that their associated FP-equation admits $f_{\infty, K}$ as a normalized steady state. But reversely, for such a FP-equation, the (normalized) steady state $f_{\infty, K}$ does not have to be unique (e.g.\ $C=D=\mathrm{diag}{(1,0)}$ admits \eqref{eq:finf} with any $K=\mathrm{diag}{(1,\kappa)}$, $\kappa>0$).
It is known from the literature (see for example Theorem 3.1, \cite{AE}) that the existence of a unique $L^1-$normalized steady state $f_{\infty, K}$ for \eqref{eq:FP} is equivalent to the following two conditions on $(C,D) \in \mathcal{M} \times \SMU$:
\begin{enumerate}
\item $C$ is positive stable (i.e., \amit{$C$ has a positive spectral gap $\rho(C):=\min\{\Re (\lambda)\::\: \lambda\in\sigma(C)\}$});
\item hypoellipticity of \eqref{eq:FP} (i.e., there is no non-trivial $C^T$-invariant subspace of $\mathrm{ ker}{(D)}$).
\end{enumerate}

For our set-up, hypoellipticity can actually be inferred from $(C,D)\in\mathcal{I}(K)$; and more precisely:
\begin{lemma}\label{lem:2.2}
For some fixed $K \in \SM$, let $(C,D)\in \mathcal{I}(K)$ and $\rho(C)>0$. Then the corresponding FP-equation \eqref{eq:FP} is hypoelliptic.
\end{lemma}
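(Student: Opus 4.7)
The plan is to use the Lyapunov-type identity that characterizes $\mathcal{I}(K)$ in order to link an invariant subspace of $C^T$ inside $\ker D$ with a pair of eigenvalues of $C$ that contradict positive stability.

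First, I would extract from Lemma \ref{lem:IKchar} the algebraic identity underlying $(C,D)\in\mathcal{I}(K)$. Writing $C=(D+J)K^{-1}$ with $J\in\mathcal{A}$ and multiplying by $K$ on the right, one has $CK = D+J$, and taking the transpose (using $K^T=K$, $D^T=D$, $J^T=-J$) one gets $KC^T = D-J$. Adding the two relations yields
\begin{equation}\label{eq:lyap-plan}
CK + KC^T = 2D.
\end{equation}
This is the only structural fact from $(C,D)\in\mathcal{I}(K)$ that I expect to need.

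Next, I would argue by contradiction: assume there is a non-trivial subspace $W\subseteq\ker D$ with $C^T W\subseteq W$. Complexifying, the restriction $C^T|_W$ has at least one eigenvalue $\lambda\in\CC$ with a corresponding eigenvector $v\in W\otimes\CC\setminus\{0\}$. Since $v\in\ker D\otimes\CC$, we have $Dv=0$. Applying \eqref{eq:lyap-plan} to $v$ gives
\begin{equation*}
C(Kv) + K(C^T v) = 2Dv = 0, \qquad \text{i.e.} \qquad C(Kv) = -\lambda\,Kv.
\end{equation*}
Because $K\in\SM$ is invertible and $v\neq 0$, the vector $Kv$ is nonzero, so $-\lambda\in\sigma(C)$. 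On the other hand, $\lambda\in\sigma(C^T)=\sigma(C)$ as well.

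The final step is the contradiction with the spectral-gap assumption: $\rho(C)>0$ forces $\Re(\mu)>0$ for every $\mu\in\sigma(C)$, but $\lambda$ and $-\lambda$ cannot both have positive real part. Hence $W=\{0\}$, which is exactly the hypoellipticity condition that no non-trivial $C^T$-invariant subspace of $\ker D$ exists. I don't foresee a real obstacle: the only slightly subtle point is working over $\CC$ to ensure the existence of an eigenvector of $C^T|_W$, but this is standard and invisible once the Lyapunov relation \eqref{eq:lyap-plan} is in hand.
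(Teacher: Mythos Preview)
Your proof is correct and takes a genuinely different route from the paper's own argument. Both proofs start from the Lyapunov identity $CK+KC^T=2D$ (which you derive from Lemma~\ref{lem:IKchar}, while the paper states it as the steady-state covariance equation), but the way it is exploited differs. The paper argues indirectly: since $D\ge0$ and $\rho(C)>0$, the Lyapunov equation has a \emph{unique} positive semi-definite solution (citing \cite{SZ}), so the normalized Gaussian steady state is unique; then the equivalence ``unique steady state $\Leftrightarrow$ positive stability $+$ hypoellipticity'' from \cite{AE} forces hypoellipticity. You instead verify the hypoellipticity condition directly: a nontrivial $C^T$-invariant subspace of $\ker D$ would yield an eigenpair $(\lambda,v)$ with $Dv=0$, and the Lyapunov identity then makes $Kv$ an eigenvector of $C$ for $-\lambda$, so that both $\lambda$ and $-\lambda$ lie in $\sigma(C)$, contradicting $\rho(C)>0$. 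Your argument is more elementary and self-contained, avoiding the two external citations; the paper's version is terser but outsources the work to the Lyapunov uniqueness theorem and the steady-state equivalence.
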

\begin{proof}
Normalized steady states of \eqref{eq:FP} are Gaussian with its covariance matrix $Q$ satisfying the continuous Lyapunov equation
\begin{equation}\label{contLy}
  2D = CQ+QC^T\ .
\end{equation}
Since $D\ge0$ and $\rho(C)>0$, \eqref{contLy} has a unique, symmetric and positive semi-definite solution $Q$ (see, e.g., Theorem 2.2 in \cite{SZ}), namely $Q=K$.

By the above mentioned equivalence to the uniqueness of the steady state, \eqref{eq:FP} is hypoelliptic.
\end{proof}

For each fixed steady state $f_{\infty, K}$ we now want to answer the following questions: 
\begin{enumerate}[(Q1)]
\item \label{Q1} Which FP-evolution(s) converge(s) the fastest, i.e. with largest decay rate $\lambda_{opt}$ to the steady state in the operator norm of $e^{-L_{C,D}t}$ on $V_0^\perp\subset\mathcal{H}:=L^2(\R^d, f_{\infty, K}^{-1})$? 
\item \label{Q2} Second, when the best decay rate is fixed, what is the infimum of the multiplicative constant, $c_{inf}$, in the decay estimate \eqref{eq:hypowithJ}? 
\item \label{Q3} Third, for a fixed $K\in\SM$ and the corresponding $\lambda_{opt}$, and for any $c>c_{inf}$, which pair(s) of matrices $(C_{opt},D_{opt}) \in \mathcal{M} \times \in \SMU$ are such that $e^{-L_{C_{opt},D_{opt}}t}$ yields the convergence estimate \eqref{eq:hypowithJ} with the constants $(\lambda_{opt},c)$?
\item \label{Q4} Forth, for such an optimal pair of matrices, what bound on $C_{opt}$ can be found, and how does this bound grow w.r.t.\ to the space dimension $d$?
\item \label{Q5} \bea{Could something be gained by allowing for time-dependent matrices $C(t)$, $D(t)$?
}
\end{enumerate}

\begin{remark}We note that, without the additional constraint $\mathrm{Tr}{(D)} \leq d$ in the definition of $\mathcal{I}(K)$, the problem of finding an optimal evolution in the above sense would be ill-posed: Indeed, if $f_t$ converges to $f_{\infty,K}$ \amit{as $t\to\infty$, then $f^{\alpha}_t:=f_{\alpha t}$, for any $\alpha>0$ and} pertaining to $(\alpha C, \alpha D)$, has the same equilibrium and converges $\alpha$ times faster to it. For this reason, we shall only consider diffusion matrices with a prescribed bound for the trace, as in \cite{GM}. 
In probabilistic language it corresponds to the requirement that the upper bound on the total amount of randomness simultaneously injected in the system is prescribed, and this bound is equal to the case $D=I_d$.
\end{remark}

\bea{Next} we shall 
optimize the decay rate within the family of FP-equations \eqref{eq:FP} satisfying $(C,D) \in \mathcal{I}(K)$. 
But our choice of matrix $C$ will, in general, differ from the one constructed in \cite{GM}. We base this optimization on the fact that the sharp exponential decay rate of the FP-equation \eqref{eq:FP} equals $\rho(C)$ (at least for $C$ diagonalizable, see \cite{AE}, e.g.). Actually, \eqref{eq:FP} and its associated drift ODE, i.e. $\frac{d}{dt}x=-\widetilde{C} x$, with $\widetilde{C}:=K^{-1/2}CK^{1/2}$ \amit{(and hence $\rho(C)=\rho(\widetilde{C})$)}, have an even closer connection, as proven in Theorem 2.3, \cite{ASS}: 
\begin{theorem}
\label{theo:theoAAS}
Let $K\in\SM$ be given. 
We consider a FP-equation \eqref{eq:FP} with $(C,D) \in \mathcal{I}(K)$ and $C$ positive stable. 
Then, the propagator norms of \eqref{eq:FP} and of its corresponding drift ODE $\frac{d}{dt}x=-\widetilde{C}x$ are equal, i.e.
\begin{equation}
\label{eq:equalityASS}
  \left\|e^{-L_{C,D}t}\right\|_{\mathcal{B}(V_0^{\perp})} = \left\|e^{-\widetilde{C}t}\right\|_{\mathcal{B}(\R^d)}, \qquad \forall t \geq 0,
\end{equation}
where $||\cdot||_{\mathcal{B}(V_0^{\perp})}$ denotes the operator norm \bea{on $\mathcal{H}$} and orthogonality is considered w.r.t. $\mathcal{H}$. Moreover, 
$$
  ||A||_{\mathcal{B}(\R^d)}  := \sup_{0\neq x_0 \in \R^d}
  \frac{\left |\left | A x_0\right | \right |_2}{\left | \left | x_0 \right | \right |_2}
$$
denotes the spectral matrix norm of any matrix $A\in\mathcal M$. 
\end{theorem}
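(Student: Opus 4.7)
The plan is to reduce to the case $K = I_d$ via an isometric change of variables, decompose $\mathcal{H}$ into finite-dimensional invariant Hermite subspaces, identify the action of the FP semigroup on each block with a symmetric tensor power of $e^{-Ct}$, and exploit the positivity $D \geq 0$ (implied by $(C,D) \in \mathcal{I}(K)$) to conclude that the first-degree block achieves the propagator norm.

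First, I would introduce the isometry $U: \mathcal H \to L^2(\R^d, f_{\infty, I_d}^{-1})$ defined by $(Uf)(y) = (\det K)^{1/2} f(K^{1/2}y)$. A direct change-of-variable computation shows that $U$ is unitary from $\mathcal H$ onto $L^2(\R^d, f_{\infty, I_d}^{-1})$ and maps $V_0^\perp$ to itself. Under conjugation by $U$, the FP operator $L_{C,D}$ transforms into $L_{\widetilde C, \widetilde D}$ with $\widetilde C = K^{-1/2} C K^{1/2}$ and $\widetilde D = K^{-1/2} D K^{-1/2}$, while the drift ODE becomes $\dot y = -\widetilde C y$. Since $U$ preserves operator norms, it suffices to prove the claim when $K = I_d$; then $\widetilde C = C$ and Lemma \ref{lem:IKchar} yields $C + C^T = 2D \geq 0$.

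Second, for $K = I_d$ I would orthogonally decompose $\mathcal H = \bigoplus_{k \geq 0} V_k$, with $V_k = \mathrm{span}\{H_\alpha f_\infty : |\alpha| = k\}$ the span of products of $f_\infty$ with multi-index Hermite polynomials of total degree $k$. A direct computation on polynomial ansätze (for instance, $u_M(x) = x^T M x - \mathrm{Tr}(M)$ at degree two) shows that the lower-order terms produced by the diffusion part of $L_{C,D}$ are exactly cancelled thanks to the identity $C + C^T = 2D$, so each finite-dimensional $V_k$ is invariant under $L_{C,D}$. Identifying $V_1$ with $\R^d$ via the basis $\{x_i f_\infty\}_{i=1}^d$, one finds that $L_{C,D}|_{V_1} = C$, giving the lower bound $\|e^{-L_{C,D}t}\|_{\mathcal B(V_0^\perp)} \geq \|e^{-Ct}\|_{\mathcal B(\R^d)}$.

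For the matching upper bound, I would identify $V_k$ with the symmetric tensor power $\mathrm{Sym}^k(\R^d)$ using the orthonormalized Hermite basis $H_\alpha/\sqrt{\alpha!}$, under which the restriction of the semigroup becomes the symmetric tensor power $(e^{-Ct})^{\otimes k}$, whose spectral norm equals $\|e^{-Ct}\|_{\mathcal B(\R^d)}^k$ (attained by rank-one symmetric tensors $v^{\otimes k}$ with $v$ maximizing $\|e^{-Ct}\|$). The crucial remaining ingredient, available because $D \geq 0$, is the monotonicity $\frac{d}{dt}\|e^{-Ct}v\|_2^2 = -2 v^T e^{-C^T t} D\, e^{-Ct} v \leq 0$, which forces $\|e^{-Ct}\|_{\mathcal B(\R^d)} \leq 1$ for every $t \geq 0$ and hence $\|e^{-Ct}\|^k \leq \|e^{-Ct}\|$ for all $k \geq 1$. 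Combining these facts gives
\[
  \|e^{-L_{C,D}t}\|_{\mathcal B(V_0^\perp)} = \sup_{k \geq 1} \|e^{-Ct}\|_{\mathcal B(\R^d)}^k = \|e^{-Ct}\|_{\mathcal B(\R^d)},
\]
which is \eqref{eq:equalityASS}. The main technical obstacle is the explicit identification of $V_k$ with $\mathrm{Sym}^k(\R^d)$ compatible with both the $\mathcal H$-inner product and the semigroup action; this requires careful combinatorial bookkeeping with the multi-index Hermite normalization and is where the special algebra of Gaussians is used in an essential way.
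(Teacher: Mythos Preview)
Your proposal is correct and follows essentially the same route as the paper's argument (sketched in the Appendix for the time-dependent extension, Theorem~\ref{theo:t-dep}, and attributed to \cite{ASS}): reduce to $K=I_d$ via a unitary change of variables, decompose $\mathcal H$ into the Hermite subspaces $V_k$, identify the first block with the drift ODE to get the lower bound, and use $\|e^{-\widetilde C t}\|_{\mathcal B(\R^d)}\le 1$ (from $\widetilde D\ge 0$) so that the $k$-th block contributes only $\|e^{-\widetilde C t}\|^k$, whence the supremum over $k\ge 1$ is attained at $k=1$. Your symmetric-tensor-power description of the action on $V_k$ is simply a more conceptual packaging of the paper's coefficient estimate $\sum_{|\alpha|=m}|\tilde d_\alpha(t)|^2\le h(t)^{2m}\sum_{|\alpha|=m}|\tilde d_\alpha(0)|^2$.
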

\bea{This result motivates to investigate the maximum spectral gap of $C$. Indeed, the 
next theorem (see Theorem 2.1 in \cite{GM}) identifies the maximum spectral gap of matrices of the form $C=(D+J)K^{-1}$, and its proof (in \cite{GM}) provides an explicit, algorithmic construction of a corresponding, optimal matrix pair $(C,D)$. 
\begin{theorem}
\label{theo:maxspectrum}
For $K \in \SM$ given,
\begin{equation}
\label{eq:maxspectrum}
\mathrm{max}{\{ \rho(C): \ (C,D) \in \mathcal{I}(K)\}}= \mathrm{max}(\sigma(K^{-1}))=\mathrm{min}(\sigma(K))^{-1}.
\end{equation}
\end{theorem}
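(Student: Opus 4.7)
My plan is to prove the identity in two independent steps: the upper bound by a short trace identity and the matching lower bound by an explicit construction. Throughout set $\mu := \lambda_{\max}(K^{-1})$; the second equality in \eqref{eq:maxspectrum} is just $\sigma(K^{-1}) = \{1/\lambda : \lambda \in \sigma(K)\}$, which holds for $K \in \SM$.

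\textbf{Upper bound.} For any $(C,D) \in \IK$, Lemma \ref{lem:IKchar} gives $C = (D+J)K^{-1}$ with $D \ge 0$, $\tr(D) \le d$ and $J = -J^T$. Since $K$ is symmetric, $\tr(JK^{-1}) = \tr((JK^{-1})^T) = -\tr(JK^{-1})$, so $\tr(JK^{-1}) = 0$ and $\tr(C) = \tr(DK^{-1})$. Working in an orthonormal basis that diagonalizes $K = \diag(k_1,\dots,k_d)$, this becomes $\tr(DK^{-1}) = \sum_i k_i^{-1} D_{ii} \le \mu \sum_i D_{ii} = \mu \tr(D) \le d\mu$. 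Writing $\lambda_1,\ldots,\lambda_d$ for the (possibly complex) eigenvalues of $C$ counted with multiplicity, $\sum_i \Re(\lambda_i) = \tr(C) \le d\mu$, hence $\rho(C) = \min_i \Re(\lambda_i) \le \mu$.

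\textbf{Lower bound via construction.} After an orthogonal change of coordinates, assume $K = \diag(k_1,\dots,k_d)$ with $k_1 \le \cdots \le k_d$, so $\mu = 1/k_1$. The trace bound above saturates only if $D$ is concentrated on the eigenspace of $K^{-1}$ associated to $\mu$, which suggests the rank-one ansatz $D = d\, e_1 e_1^T$. This gives $\tr(D) = d$ and $\tr(C) = d\mu$, so the first elementary symmetric function of $\sigma(C)$ already hits the target $d\mu$. In two dimensions, choosing $J = j(e_1 e_2^T - e_2 e_1^T)$ yields
$$C = \begin{pmatrix} 2/k_1 & j/k_2 \\ -j/k_1 & 0 \end{pmatrix},$$
whose characteristic polynomial equals $(\lambda - 1/k_1)^2$ precisely when $j = \sqrt{k_2/k_1}$; hence $\rho(C) = \mu$. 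In general dimension, with the same ansatz for $D$, the $d(d-1)/2$ independent entries of the antisymmetric $J$ must be chosen so that the remaining $d-1$ elementary symmetric functions $e_k(\sigma(C))$ for $k=2,\dots,d$ match their target values $\binom{d}{k}\mu^k$; equivalently, $C$ acquires characteristic polynomial $(\lambda-\mu)^d$ and every eigenvalue of $C$ collapses to $\mu$. This is the algorithmic construction spelled out in the proof of Theorem 2.1 of \cite{GM}.

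\textbf{Main obstacle.} The upper bound is elementary. The substantive step is the construction in arbitrary dimension: one must verify that the resulting polynomial system in the entries of $J$ admits a real antisymmetric solution and that the corresponding $C$ remains positive stable, so that Lemma \ref{lem:2.2} ensures hypoellipticity and $\rho(C)$ truly governs the decay via Theorem \ref{theo:theoAAS}. Rather than redoing this combinatorial/algebraic book-keeping, I would invoke the recursive scheme of \cite{GM}, which also shows explicitly that such an optimal $(C,D)$ can be chosen with $D$ of rank one.
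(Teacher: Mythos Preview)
Your proposal is correct and matches the paper's treatment: the paper does not prove Theorem~\ref{theo:maxspectrum} itself but cites Theorem~2.1 of \cite{GM}, whose proof consists precisely of the trace upper bound you wrote out and the rank-one construction you sketch (and ultimately also defer to \cite{GM}). Your upper-bound argument is exactly the standard one; the 2D example is a nice illustration of the recursive scheme in \cite{GM}.

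One small remark: your final paragraph brings in Lemma~\ref{lem:2.2} and Theorem~\ref{theo:theoAAS}, but Theorem~\ref{theo:maxspectrum} is a pure linear-algebra statement about $\rho(C)$ over $(C,D)\in\IK$; hypoellipticity and the link to the FP-propagator decay are irrelevant here and only enter later in Theorem~\ref{theo:maintheo}. Positive stability of the constructed $C$ is automatic once $\sigma(C)=\{\mu\}$ with $\mu>0$.
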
}

Concerning the above questions, the article \cite{GM} gives the following (partial) answers: The authors give a complete and positive answer to question Q\ref{Q1}, obtaining the optimal decay rate $\lambda_{opt}=\max(\sigma(K^{-1}))$. Their optimal pair  \linebreak
$(C_{opt},D_{opt}) \in \mathcal{I}(K)$ is very degenerate, the rank of $D_{opt}$ being one
\bea{(and this will also be the case for our approach below).}
But concerning questions Q\ref{Q2} and Q\ref{Q3}, they obtain an estimate for the multiplicative constant that grows dramatically with the dimension (in fact of order $d^{40 d^2}$). This is obtained in \cite{GM} when considering a  FP-equation with {\it time-independent} coefficients, i.e.\ the equation form introduced in \eqref{eq:FP}. As a remedy, the authors then considered {\it time-dependent} coefficients, using a symmetric FP-equation with the matrices $(K^{-1},I_d)$ for small times and a non-symmetric FP-equation for large times. 
\bea{Discontinuous coefficients were used there for analytical reasons, to improve decay estimates. But since their estimates are not sharp, it is not clear if time-dependent coefficients are really able to enhance the decay property of the {\it exact} FP-propagator norm, i.e.\ the true function of time, without estimates. We shall return to this question in \S\ref{sec:4.3} to elucidate question Q\ref{Q5}. \\
\indent
While the main result of \cite{GM}} is presented for the logarithmic relative entropy, the same argument works also for the $L^2$-norm, as already noted on page 5, \cite{GM}:
\begin{theorem}[Theorem 2.2, \cite{GM}]
\label{theo:theo2.2inGM}
\amit{Let $K\in\SM$ be given. }
\
\begin{enumerate}
\item[(a)] For any $\bea{\tilde c}>1$ it is possible to construct a matrix pair $(C_{opt},D_{opt}) \in \mathcal{I}(K)$ such that, for all normalized $f_0\in\H$ and for all $t_0>0$,
\begin{equation}
\label{eq:hypoestimatesinGM}
||f_t-f_{\infty,K}||^2_{\mathcal{H}} \leq \bea{\tilde c} \frac{\max(\sigma(K^{-1}))}{2t_0 } e^{-2\max(\sigma(K^{-1}))(t-t_0)} ||f_0-f_{\infty}||^2_{\mathcal{H}}\,, \qquad t\geq t_0, 
\end{equation}
where $f_t$ solves the following system of FP-equations
\begin{equation}\label{splitFP}
    \begin{cases}
    \partial_t f_t=\operatorname{div}_x( \nabla_x f_t + K^{-1}xf_t), \qquad\qquad\:\:\:\: 0\leq t \leq t_0\,,
    \\
     \partial_t f_t=\operatorname{div}_x( D_{opt}\nabla_x f_t + C_{opt}xf_t), \qquad t > t_0\,.
    \end{cases}\,
\end{equation}
\item[(b)] For the choice $\bea{\tilde c}=2$ in part (a), the matrix 
$C_{opt}$ can be estimated as
\begin{equation}\label{C-Frobenius}
||C_{opt}||_\F \le 4d^2 \sqrt{\kappa(K)}\,\lambda_{opt} \,,
\end{equation}
where $|| \cdot ||_{\mathcal{F}}$ denotes the Frobenius norm $||A||_{\mathcal{F}}:=\sqrt{\mathrm{Tr}\left( A^TA \right)}$, and $\kappa(K)$ is the condition number of $K$.
\end{enumerate}
\end{theorem}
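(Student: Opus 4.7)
The plan is to analyze the two phases of \eqref{splitFP} separately and stitch them together via a quantitative short-time regularization estimate. First, invoke Theorem \ref{theo:maxspectrum} to fix a pair $(C_{opt},D_{opt}) \in \mathcal{I}(K)$ achieving $\rho(C_{opt})=\lambda_{opt}:=\max(\sigma(K^{-1}))$, with $D_{opt}$ of rank one (the most degenerate admissible choice). By Theorem \ref{theo:theoAAS}, the Phase 2 propagator norm on $V_0^\perp \subset \mathcal{H}$ coincides with the matrix exponential $\|e^{-\widetilde{C}_{opt} t}\|_{\mathcal{B}(\R^d)}$ for $\widetilde{C}_{opt}:=K^{-1/2}C_{opt}K^{1/2}$, reducing the Phase 2 analysis to a purely finite-dimensional ODE estimate.

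For Phase 1 ($0\le t\le t_0$), exploit that $L_{K^{-1},I_d}$ is self-adjoint on $\mathcal{H}$ with eigenbasis given by multivariate Hermite polynomials adapted to $f_{\infty,K}$ and eigenvalues of the form $\sum_i n_i \mu_i$ with $\mu_i \in \sigma(K^{-1})$. Applying the elementary scalar inequality $n\,e^{-2\mu n t_0}\le (2e\,t_0\mu)^{-1}$ (valid for each integer $n\ge 1$ and $\mu>0$) mode-by-mode in the Hermite expansion of $f_0-f_\infty$ yields a quantitative smoothing estimate that controls a weighted $H^1$-type seminorm of $f_{t_0}-f_\infty$ by $\tfrac{\lambda_{opt}}{2t_0}\|f_0-f_\infty\|_\mathcal{H}^2$, reproducing exactly the prefactor appearing in \eqref{eq:hypoestimatesinGM}. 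The worst-case mode saturating this bound is the one associated with $\lambda_{opt}$, which is why $\max(\sigma(K^{-1}))$ appears.

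For Phase 2 ($t>t_0$), apply a Lyapunov-type estimate $\|e^{-\widetilde{C}_{opt}\tau}\|_{\mathcal{B}(\R^d)}\le M\, e^{-\lambda_{opt}\tau}$ for $\tau=t-t_0\ge0$, where the multiplicative constant $M$ depends on the anti-symmetric part $J$ in the decomposition $C_{opt}=(D_{opt}+J)K^{-1}$. The key point, and the main technical obstacle, is to show that $M\downarrow 1$ can be arranged by a \emph{finite} (though possibly large) choice of $J$ with strong rotational character; this corresponds to the high-rotational limit of the FP drift and encodes the freedom $\tilde c > 1$. Combining the Phase 1 smoothing bound with the Phase 2 decay bound applied to the Phase 1 output yields \eqref{eq:hypoestimatesinGM}.

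For part (b), specialize to $\tilde c=2$, which pins down a concrete choice of $J$ and hence of $C_{opt}$. Submultiplicativity of the Frobenius norm with respect to matrix products of a matrix with an operator-norm factor gives
\begin{equation*}
\|C_{opt}\|_\F \;=\; \|K^{1/2}\widetilde{C}_{opt}K^{-1/2}\|_\F \;\le\; \|K^{1/2}\|_{\mathcal{B}(\R^d)}\,\|K^{-1/2}\|_{\mathcal{B}(\R^d)}\,\|\widetilde{C}_{opt}\|_\F \;=\; \sqrt{\kappa(K)}\,\|\widetilde{C}_{opt}\|_\F.
\end{equation*}
The algorithmic construction of $\widetilde{C}_{opt}$ underlying Theorem \ref{theo:maxspectrum} assembles a rank-one diffusion together with $O(d^2)$ rotational entries each of size at most $O(\lambda_{opt})$, which yields $\|\widetilde{C}_{opt}\|_\F \le 4d^2\lambda_{opt}$ after bookkeeping, hence \eqref{C-Frobenius}. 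I expect the verification of this entrywise bound to be routine but tedious, while the truly delicate point of the whole proof remains the non-normality estimate controlling $M$ in Phase 2.
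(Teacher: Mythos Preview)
This theorem is not proved in the present paper at all: it is quoted verbatim from \cite{GM} as background, and the paper explicitly contrasts \cite{GM}'s method (``a hypocoercive entropy method as in the proof of Theorem 2.2, \cite{GM}'') with its own approach based on Theorem~\ref{theo:theoAAS}. So there is no paper-proof to compare against; the relevant comparison is with the argument in \cite{GM}.

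That said, your outline has a structural inconsistency that would make it fail as a proof of \emph{this} statement. You invoke Theorem~\ref{theo:theoAAS} for Phase~2, which gives an $L^2$-propagator bound $\|e^{-L_{C_{opt},D_{opt}}\tau}\|_{\mathcal{B}(V_0^\perp)}=\|e^{-\widetilde C_{opt}\tau}\|_{\mathcal{B}(\R^d)}\le M e^{-\lambda_{opt}\tau}$ directly in $\mathcal{H}$, with no smoothness requirement on the input. If that is available, the Phase~1 regularization is pointless: concatenating with the trivial Phase~1 bound $\|f_{t_0}-f_\infty\|_\mathcal{H}\le\|f_0-f_\infty\|_\mathcal{H}$ already yields $\|f_t-f_\infty\|_\mathcal{H}^2\le M^2 e^{-2\lambda_{opt}(t-t_0)}\|f_0-f_\infty\|_\mathcal{H}^2$, which is strictly stronger than \eqref{eq:hypoestimatesinGM} and has no $\tfrac{\lambda_{opt}}{2t_0}$ factor. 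In other words, once you use Theorem~\ref{theo:theoAAS} you have essentially reproved Theorem~\ref{theo:maintheo}(a), not Theorem~\ref{theo:theo2.2inGM}. The split structure and the $1/t_0$ prefactor in \cite{GM} arise precisely \emph{because} their Phase~2 argument is an entropy/Fisher-information method that requires the input $f_{t_0}$ to lie in a weighted $H^1$-type space; your Phase~1 Hermite computation is aimed at producing exactly such a bound, but then your Phase~2 never consumes it. To actually recover \eqref{eq:hypoestimatesinGM} you must replace the Phase~2 step by the hypocoercive modified-entropy estimate of \cite{GM} (which controls $\|f_t-f_\infty\|_\mathcal{H}^2$ by a Fisher-type functional of $f_{t_0}$), and then your Phase~1 smoothing bound feeds into it correctly.

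For part~(b), your reduction $\|C_{opt}\|_\F\le\sqrt{\kappa(K)}\,\|\widetilde C_{opt}\|_\F$ is fine, and the $O(d^2)$ count of rotational entries is the right mechanism; but note that fixing $\tilde c=2$ in the \cite{GM} construction corresponds to the parameter choice $\lambda_k=d+k$ (cf.\ \eqref{lambda-GM}), not to the high-rotational limit, so the ``$M\downarrow1$'' discussion is orthogonal to the norm bound here.
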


Optimizing the estimate \eqref{eq:hypoestimatesinGM} w.r.t.\ the switching time $t_0$, and using the trivial bound $\left\|e^{-L_{C,D}t}\right\|_{\mathcal{B}(V_0^{\perp})}\le1$ we obtain the following result:
\begin{corollary}\label{cor-GM}
Under the assumptions of Theorem \ref{theo:theo2.2inGM}, and when choosing $t_0:=\min(\sigma(K))/2$, the following estimate holds for all normalized $f_0\in\H$:
\begin{equation}
\label{eq:hypoestimatesinGM2}
||f_t-f_{\infty,K}||^2_{\mathcal{H}} \leq ||f_0-f_{\infty}||^2_{\mathcal{H}} \times
    \begin{cases}
    1, \qquad\qquad\qquad\qquad\qquad\qquad\qquad\quad\:\:\:  0\leq t \leq t_0\,,
    \\
     \min\big\{1,\,\bea{\tilde c}\,\kappa(K)\,e^{1-2\max(\sigma(K^{-1}))t}\big\}, \qquad t > t_0\,.
    \end{cases} 
\end{equation}
where $f_t$ solves the FP-system \eqref{splitFP}.
\end{corollary}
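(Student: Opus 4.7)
The plan is to substitute the explicit choice $t_0 = \min(\sigma(K))/2 = 1/(2\lambda_{opt})$, with $\lambda_{opt}:=\max(\sigma(K^{-1}))$, into \eqref{eq:hypoestimatesinGM} of Theorem \ref{theo:theo2.2inGM} and to combine the resulting hypocoercive exponential bound with the universal $L^2$--non-expansivity of the FP-propagator on $V_0^\perp$.

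For the interval $0\le t\le t_0$, the solution $f_t$ evolves under the symmetric FP-equation with coefficients $(K^{-1},I_d)$, whose generator is coercive on $V_0^\perp$ with spectral gap $\lambda_{opt}$. Hence \eqref{eq:coercRevFP} immediately gives
$\|f_t-f_{\infty,K}\|_{\mathcal{H}}^2\le e^{-2\lambda_{opt}t}\|f_0-f_{\infty,K}\|_{\mathcal{H}}^2\le\|f_0-f_{\infty,K}\|_{\mathcal{H}}^2$,
producing the first branch of the piecewise bound.

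For $t>t_0$, I would take the minimum of two independent bounds. The first follows by evaluating \eqref{eq:hypoestimatesinGM} at the chosen $t_0$: the exponent $-2\lambda_{opt}(t-t_0)$ becomes $1-2\lambda_{opt}t$, while the prefactor $\tilde c\,\lambda_{opt}/(2t_0)$ is reformulated as $\tilde c\,\kappa(K)$ using the relations between $\min(\sigma(K))$, $\max(\sigma(K))$ and $\kappa(K)$. The second is the universal estimate $\|e^{-L_{C,D}t}\|_{\mathcal{B}(V_0^\perp)}\le 1$ for any $(C,D)\in\mathcal{I}(K)$; this is the trivial bound mentioned in the paragraph preceding the corollary, and it follows from the standard entropy dissipation identity
\begin{equation*}
\frac{d}{dt}\|f_t-f_{\infty,K}\|_{\mathcal{H}}^2=-2\int_{\R^d}(\nabla h_t)^T D\,(\nabla h_t)\,f_{\infty,K}\,dx\le 0,\qquad h_t:=f_t/f_{\infty,K},
\end{equation*}
applied to both FP-flows in \eqref{splitFP}. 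The $\min$ of these two bounds yields the second branch.

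The choice $t_0=\min(\sigma(K))/2$ is precisely the minimizer of $t_0\mapsto (2t_0)^{-1}e^{2\lambda_{opt}t_0}$, i.e.\ of the prefactor in \eqref{eq:hypoestimatesinGM} at any fixed $t$; this is what the authors call ``optimizing w.r.t.\ the switching time $t_0$''. The only piece that requires care is the bookkeeping which rewrites the prefactor at this optimal $t_0$ in the compact form $\tilde c\,\kappa(K)\,e$; I expect this algebraic identification, rather than any conceptual hurdle, to be the main obstacle.
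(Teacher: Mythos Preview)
Your approach is essentially identical to the paper's, which simply says to optimize \eqref{eq:hypoestimatesinGM} over the switching time $t_0$ and combine with the trivial contraction bound $\left\|e^{-L_{C,D}t}\right\|_{\mathcal{B}(V_0^\perp)}\le 1$. One harmless slip: the spectral gap of the symmetric flow $(K^{-1},I_d)$ is $\min(\sigma(K^{-1}))$, not $\lambda_{opt}=\max(\sigma(K^{-1}))$; but since you immediately relax that estimate to $\le 1$ on $[0,t_0]$, this does not affect the argument.
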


Hence, Theorem 2.2 from \cite{GM} can only yield multiplicative constants $c\bea{=\sqrt{\tilde c\,\kappa(K)\,e}}>\sqrt{\kappa(K)\,e}$, using the notation of \eqref{eq:hypowithJ}.
\bigskip

In the next section we shall improve this result in three directions: Answering question Q\ref{Q2} we shall prove that $c_{inf}$ is always 1, and concerning question Q\ref{Q3} we shall construct an optimal matrix pair $(C_{opt}(c),D_{opt}(c))$ for any given $c>1$. Moreover, we shall not need to split the FP-evolution in time, in contrast to \eqref{splitFP}. Our key ingredient to obtain an improved result (compared to \cite{LNP, GM}) is the equality of the propagator norms of the FP-equation and of its drift ODE, see Theorem \ref{theo:theoAAS}. This reduces the quest for an optimal decay estimate to an analogous, and hence easier ODE problem, without having to invoke a hypocoercive entropy method as in the proof of Theorem 2.2, \cite{GM}, or the block-diagonal decomposition of the FP-propagator as in the proof of Proposition 11, \cite{LNP}. Finally, concerning question Q\ref{Q4} we shall show that our drift matrix $C_{opt}(c)$ grows like $\mathcal O(d^{3/2})$ (for any fixed $c>1$), compared to an $\mathcal O(d^2)$--growth in \cite{GM}.

\bea{
\subsection{Time-dependent coefficients}\label{sec2.1}
In order to analyze also the decay behavior of the split FP-equation \eqref{splitFP}, we shall next admit in the FP-equation \eqref{eq:FP} time-dependent coefficient matrices:
\begin{equation}\label{eq:tFP}
 \partial_{t} f_t = -L(t)f_t := \mathrm{div}_{x}{(D(t)\nabla_x f_t+C(t)xf_t)}, \qquad x \in \R^d, \ t \in (0,\infty).
\end{equation}
Here we assume that each FP-operator $L(t)$, with $t\ge0$ fixed, admits $f_{\infty,K}$ as a steady state, and that the covariance matrix $K\in\SM$ is given and independent of $t$. Hence, the coefficient matrices satisfy $(C(t),D(t))\in \I(K)$ $\forall t\ge0$, and by Lemma \ref{lem:IKchar}:
$$
  C(t)=(D(t)+J(t))\,K^{-1}, \quad \mbox{with some } J(t)\in\AAA,\:\:
  \forall t\ge0.
$$
We shall assume $\forall t\ge0$ that $\rho(C(t))>0$. Hence, by Lemma \ref{lem:2.2} each FP-operator $L(t)$ is hypocoercive. For \eqref{eq:tFP}, Theorem \ref{theo:theoAAS} can be extended: In the following theorem $S(t_2,t_1)$ and $T(t_2,t_1)$, $0\le t_1\le t_2<\infty$ will denote, respectively, the propagator operators for the PDE \eqref{eq:tFP} and the ODE \eqref{eq:tODE} that map an initial condition at time $t_1$ to the solution at time $t_2$.\\
\begin{theorem}\label{theo:t-dep}
  Let $K\in\SM$ be given. Let $(C(t),D(t))\in \I(K)$  be piecewise smooth functions of $t\ge0$ (where points of discontinuity do not accumulate), such that the initial value problem for \eqref{eq:tFP} admits a unique solution in $C([0,\infty);\H)$. Then, the propagator norms of \eqref{eq:tFP} and of its corresponding drift ODE,
  \begin{equation}\label{eq:tODE}
    \frac{d}{dt}x=-\widetilde{C}(t)x,\quad t\in(0,\infty),
  \end{equation}
where $\widetilde{C}(t) := K^{-1/2}\,C(t)\,K^{1/2}$, are equal, i.e.:
  \begin{equation}\label{prop-equal-t}
     \left\|S(t_2,t_1)\right\|_{\mathcal{B}(V_0^{\perp})} = \left\|T(t_2,t_1)\right\|_{\mathcal{B}(\R^d)}, \qquad \forall 0\le t_1\le t_2<\infty.
  \end{equation}
\end{theorem}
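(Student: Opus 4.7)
The plan is to adapt the strategy used for the time-independent Theorem~\ref{theo:theoAAS} and to establish the two inequalities $\|S(t_2,t_1)\|_{\mathcal{B}(V_0^{\perp})} \ge \|T(t_2,t_1)\|_{\mathcal{B}(\R^d)}$ and $\|S(t_2,t_1)\|_{\mathcal{B}(V_0^{\perp})} \le \|T(t_2,t_1)\|_{\mathcal{B}(\R^d)}$ separately.

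For the lower bound I would consider the explicit one-parameter family of solutions generated by the ansatz
\begin{equation*}
  f_t(x) = f_{\infty,K}(x)\bigl(1 + a(t)^T K^{-1} x\bigr), \qquad a(t)\in\R^d,
\end{equation*}
whose values automatically belong to $V_0^{\perp}$ since $\int x\,f_{\infty,K}\,dx=0$. A direct computation based on $(C(t),D(t))\in\I(K)$ — in particular the identities $\tr(J(t)K^{-1})=0$ and $K^{-1}J(t)K^{-1}\in\AAA$ — shows that $f_t$ solves \eqref{eq:tFP} iff $\dot a(t)=-C(t)a(t)$. Introducing $b(t):=K^{-1/2}a(t)$ turns this into $\dot b(t)=-\widetilde{C}(t)b(t)$, i.e.\ the drift ODE \eqref{eq:tODE}. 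An elementary Gaussian integral then gives
\begin{equation*}
  \|f_t-f_{\infty,K}\|_{\H}^2 \;=\; a(t)^T K^{-1}a(t) \;=\; \|b(t)\|_2^2,
\end{equation*}
so the PDE-norm ratio equals the ODE-norm ratio exactly, and choosing $b(t_1)$ to be a maximizer for $T(t_2,t_1)$ produces the lower bound.

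For the upper bound I would carry the Hermite-decomposition argument from the proof of Theorem~\ref{theo:theoAAS} (Theorem 2.3 in \cite{ASS}) into the time-dependent setting. The decomposition itself is purely algebraic: using the same computation as above one checks
\begin{equation*}
  -L(t)\bigl(p\,f_{\infty,K}\bigr)=f_{\infty,K}\,\bigl[(J(t)-D(t))K^{-1}x\cdot\nabla p + D(t){:}\nabla^2 p\bigr],
\end{equation*}
so each $L(t)$ preserves the Hermite filtration $V_n:=\{p\,f_{\infty,K}:\deg p\le n\}\cap V_0^{\perp}$, and hence so does the propagator $S(t_2,t_1)$. The restriction of $S(t_2,t_1)$ to the first shell $V_1$ coincides, under the identification $a\mapsto f_{\infty,K}\,a^T K^{-1}x$ together with $b=K^{-1/2}a$, with the drift propagator $T(t_2,t_1)$. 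On higher shells the action is, modulo strictly lower degrees, the $n$-fold symmetric tensor power of $\widetilde{C}(t)$, so the induced shell-$n$ propagator norm is controlled by $\|T(t_2,t_1)\|_{\mathcal{B}(\R^d)}^n$. Since $\widetilde{C}(t)+\widetilde{C}(t)^T = 2K^{-1/2}D(t)K^{-1/2}\ge0$ for all $t\ge0$ — again a direct consequence of $(C(t),D(t))\in\I(K)$ — one has $\|T(t_2,t_1)\|_{\mathcal{B}(\R^d)}\le1$, and therefore $\|T\|^n\le\|T\|$ for every $n\ge1$. Combining these estimates shell by shell yields the upper inequality.

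The hard part, and the place where the time-dependent case genuinely differs from the proof in \cite{ASS}, will be the rigorous control of the subdiagonal coupling between Hermite shells: the filtration $V_n$ is invariant at each fixed $t$, but identifying the shell-$n$ dynamics with a symmetric tensor power of $T(\cdot,\cdot)$ must now be implemented through a Duhamel iteration along the filtration rather than through a single diagonalization, and one has to verify that the accumulated off-diagonal contributions do not inflate the overall operator norm beyond $\|T\|$. The remaining ingredients are routine: polynomial multiples of $f_{\infty,K}$ are dense in $\H$, so the shellwise bound extends to all of $V_0^{\perp}$; and the piecewise smoothness of $(C(t),D(t))$ together with continuous dependence of $f_t$ on $t$ allow the inequality to be concatenated across the smooth intervals and across the isolated discontinuities.
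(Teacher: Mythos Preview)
Your overall plan matches the paper's: show both inequalities, use the first Hermite shell for the lower bound, and use the tensor-power structure on higher shells together with $\|T(t_2,t_1)\|_{\mathcal{B}(\R^d)}\le 1$ for the upper bound. The lower-bound argument you wrote is exactly what the paper does (phrased there as ``initial conditions in $\widetilde V^{(1)}$'').

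The point where you diverge from the paper is precisely the part you label ``hard'': the subdiagonal coupling between shells. In the paper's approach this difficulty does not arise at all. After the coordinate change $\tilde x:=K^{-1/2}x$ (so that the steady state becomes the standard Gaussian $\tilde f_\infty$), one works with the \emph{orthogonal} Hermite decomposition $\widetilde V^{(m)}:=\operatorname{span}\{g_\alpha:=(-1)^{|\alpha|}\nabla^\alpha\tilde f_\infty:|\alpha|=m\}$ rather than the polynomial-degree filtration. A short computation using the recursion $\tilde x_j g_\alpha=g_{\alpha+e_j}+\alpha_j g_{\alpha-e_j}$ and $\partial_i g_\alpha=-g_{\alpha+e_i}$ gives
\[
  -\widetilde L(t)\,g_\alpha \;=\; -\sum_{i,j}\widetilde C_{ij}(t)\,\alpha_j\,g_{\alpha-e_j+e_i},
\]
since the would-be degree-raising term $\sum_{i,j}\widetilde J_{ij}g_{\alpha+e_i+e_j}$ vanishes by antisymmetry of $\widetilde J=\widetilde C-\widetilde D$. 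Thus each shell $\widetilde V^{(m)}$ is \emph{exactly} invariant under every $\widetilde L(t)$, the propagator is genuinely block-diagonal, and on $\widetilde V^{(m)}$ it is the $m$-th symmetric tensor power of $T(t_2,t_1)$ with norm $\|T(t_2,t_1)\|_{\mathcal{B}(\R^d)}^{m}$. No Duhamel iteration or control of off-diagonal terms is needed. Your filtration/monomial picture hides this cancellation; if you stayed with it, the block-upper-triangular structure could in principle inflate the operator norm beyond the diagonal blocks, and it is not clear your sketch would close. Switching to the orthogonal Hermite basis removes the issue entirely, and the time-dependent case then really is a routine extension of the constant-coefficient proof in \cite{ASS}.
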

Since this result is a straightforward extension of Theorem 2.3 in \cite{ASS}, we shall give only some hints on the notational differences in the Appendix.
}

\begin{section}{Main result}\label{sec3}
The next theorem is the main result of this work. It states the existence of pairs $(C_{opt},D_{opt})=(C_{opt}(c),D_{opt}(c)) \in \mathcal{I}(K)$ that yield the maximum decay rate of the propagator norm of $e^{-L_{C_{opt},D_{opt}}t}$, and in parallel yielding a multiplicative constant $c$ arbitrarily close to $1$. 
\begin{theorem}
\label{theo:maintheo} Let $K \in \SM$ be given.
\
\begin{enumerate}
\item[(a)] 
Then, for any constant $c>1$ there exists a pair $(C_{opt},D_{opt})$ $=$\linebreak 
$(C_{opt}(c),D_{opt}(c)) \in \mathcal{I}(K)$ such that
\begin{equation}
\label{eq:mainIn}
\left | \left | e^{-L_{C_{opt},D_{opt}}t} \right | \right | _{\mathcal{B}(V_0^{\perp})} \leq  c e^{-\max (\sigma(K^{-1}))t}, \qquad t \geq 0.
\end{equation}
\item[(b)] The matrices from part (a) can be estimated as
\begin{equation}
\label{eq:InFrobenius}
\left | \left |C_{opt} \right | \right | _{\mathcal{F}} \leq
\lambda_{opt} \, \Big[d +\sqrt{\kappa(K)} \, \frac{2\pi c^2}{\sqrt{3}(c^2-1)} \, \sqrt d\,(d-1) \Big],  \quad \left | \left | D_{opt} \right | \right | _{\mathcal{F}}=d.
\end{equation}
\end{enumerate}
\end{theorem}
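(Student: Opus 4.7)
The plan is to leverage Theorem \ref{theo:theoAAS}, which identifies the propagator norm of \eqref{eq:FP} on $V_0^\perp$ with the spectral norm of the finite-dimensional propagator $e^{-\widetilde C t}$ on $\R^d$, where $\widetilde C := K^{-1/2}CK^{1/2}$. This reduces the optimization to a matrix-analytic question: construct an admissible pair $(C,D)\in\I(K)$ whose transformed drift $\widetilde C$ has spectral gap $\lambda_{opt}$ and propagator norm arbitrarily close to $e^{-\lambda_{opt}t}$.

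First I decompose $\widetilde C = \widetilde D + \widetilde J$ with $\widetilde D := K^{-1/2}DK^{-1/2}$ symmetric positive semi-definite and $\widetilde J := K^{-1/2}JK^{-1/2}$ anti-symmetric. Combining the trace identity $\mathrm{tr}(\widetilde C) = \mathrm{tr}(\widetilde D) = \sum_j \mathrm{Re}\,\lambda_j(\widetilde C)$, the spectral-gap requirement $\min_j\mathrm{Re}\,\lambda_j(\widetilde C) = \lambda_{opt}$, and the elementary bound $\mathrm{tr}(\widetilde D)=\mathrm{tr}(K^{-1}D)\le \lambda_{opt}\,\mathrm{tr}(D)\le d\lambda_{opt}$ forces equality throughout: $\mathrm{tr}(D)=d$ and $D$ supported on the top eigenspace of $K^{-1}$. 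I take the rank-one choice $D = d\,vv^T$ with $v$ a unit eigenvector of $K^{-1}$ for $\lambda_{opt}$, which gives $\widetilde D = d\lambda_{opt}\,vv^T$ and immediately $\|D_{opt}\|_F = d$.

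The core construction is of $\widetilde J$ as a high-frequency rotation. In an orthonormal basis in which $v$ decomposes uniformly across $m=\lfloor d/2\rfloor$ two-dimensional planes, I let $\widetilde J$ be block-diagonal with the $k$-th block a planar rotation generator of frequency $\omega_k = k\omega$; here $\omega>0$ is a single scaling parameter. Passing to the orthogonal rotating frame $U(t):=e^{-\widetilde J t}$ and setting $y(t) := U(t)^{-1}e^{-\widetilde C t}x_0$, one has $|y(t)| = |e^{-\widetilde C t}x_0|$ and
\begin{equation*}
\tfrac{d}{dt}|y(t)|^2 = -2d\lambda_{opt}\,(\tilde v(t)^T y(t))^2, \qquad \tilde v(t) := U(t)^{-1} v.
\end{equation*}
The frequencies are tailored so that over the common period $T = 2\pi/\omega$ the \emph{exact} averaging identity $\int_0^T \tilde v(t)\tilde v(t)^T\,dt = (T/d)\,I_d$ holds. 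A Gr\"onwall-type argument against the slowly varying $|y|$ then yields $\|e^{-\widetilde C t}\|_{\mathcal B(\R^d)}\le c(\omega)\,e^{-\lambda_{opt}t}$ with $c(\omega)\searrow 1$ as $\omega\to\infty$. Inverting the explicit $c$--$\omega$ relation and computing $\|\widetilde J\|_F^2 = 2\omega^2\sum_{k=1}^m k^2$, together with the identities $DK^{-1} = \lambda_{opt}D$ (from $v$ being a $K^{-1}$-eigenvector) and $\|K^{1/2}\widetilde JK^{-1/2}\|_F\le\sqrt{\kappa(K)}\,\|\widetilde J\|_F$, then delivers \eqref{eq:InFrobenius}.

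The main technical obstacle will be the quantitative averaging step: extracting $c(\omega)$ with the sharp prefactor $2\pi c^2/[\sqrt 3(c^2-1)]$ rather than a crude $\mathcal O(1/\omega)$ estimate. A naive Lyapunov approach with $P$ close to $I$ fails because $\widetilde D$ has rank one---no such $P$ satisfies $P\widetilde C + \widetilde C^T P\ge 2\lambda_{opt}P$ without leveraging the commutator $[P,\widetilde J]$, which in turn forces $\|P-I\|\asymp 1/\omega$ and destroys the sharp constant. I therefore plan a direct integration of the $y$-equation over a single rotation period, using the exact averaging identity together with the a priori slow-variation bound $|\dot y|\le d\lambda_{opt}|y|$, to extract the claimed explicit prefactor without passing through a modified inner product.
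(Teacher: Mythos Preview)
Your proposal contains a genuine gap that is fatal for $d\ge 3$: the block-diagonal rotation $\widetilde J$ with integer-multiple frequencies $\omega_k=k\omega$ does \emph{not} produce a drift $\widetilde C=\widetilde D+\widetilde J$ with spectral gap equal to $\lambda_{opt}$. Consequently the bound $\|e^{-\widetilde C t}\|\le c\,e^{-\lambda_{opt}t}$ fails for large $t$, no matter how large $\omega$ is chosen. To see this concretely, take $\lambda_{opt}=1$ and $d=4$, so that in your basis $v=\tfrac12(1,1,1,1)^T$, $\widetilde D$ is the all-ones matrix, and $\widetilde J=\operatorname{diag}\big(\begin{smallmatrix}0&-\omega\\ \omega&0\end{smallmatrix},\,\begin{smallmatrix}0&-2\omega\\ 2\omega&0\end{smallmatrix}\big)$. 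Writing $s=\mu-1$ for the eigenvalues $\mu$ of $\widetilde C$, the characteristic equation becomes $(s^2-1+\omega^2)(s^2-1+4\omega^2)=4(s+1)^2$. A straightforward perturbation at $s\approx 2i\omega$ gives $\operatorname{Re} s \approx -\tfrac{4}{3\omega^2}<0$, i.e.\ two eigenvalues of $\widetilde C$ have real part $1-\tfrac{4}{3\omega^2}<\lambda_{opt}$; the pair near $\pm i\omega$ compensates with $\operatorname{Re} s\approx +\tfrac{4}{3\omega^2}$, consistent with the trace constraint. For odd $d=3$ the same phenomenon occurs (the cubic $s^3+(\omega^2-3)s+2=0$ has one real root $s_0\approx -2/\omega^2$ and a conjugate pair with positive real part). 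Your averaging identity $\int_0^T \tilde v\tilde v^T\,dt=(T/d)I_d$ only pins down the \emph{averaged} generator; the $\mathcal O(T)$ correction to the monodromy spreads the Floquet exponents symmetrically around $-\lambda_{opt}$, and a Gr\"onwall argument over periods cannot prevent the resulting $\mathcal O(1/\omega^2)$-deficit from accumulating linearly in $t$.

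The paper avoids this entirely by \emph{not} choosing $\widetilde J$ as an ad hoc rotation. In the same orthonormal basis $\{\psi_k\}$ with $\langle v,\psi_k\rangle^2=1/d$, it picks any $0<\lambda_1<\cdots<\lambda_d$ with $\lambda_d/\lambda_1=c^2$, sets $(\widehat J_{opt})_{j,k}=\tfrac{\lambda_j+\lambda_k}{\lambda_j-\lambda_k}\langle\psi_j,\widetilde D_{opt}\psi_k\rangle$ for $j\ne k$, and defines $Q=\Psi\operatorname{diag}(\lambda_k)\Psi^{-1}$, $P=Q^{-1}$. This $\widehat J$ is engineered precisely so that $Q\widetilde C^T+\widetilde C Q=2\lambda_{opt}Q$ holds \emph{exactly}, forcing every eigenvalue of $\widetilde C$ to have real part $\lambda_{opt}$ and giving $\|x(t)\|_P=e^{-\lambda_{opt}t}\|x(0)\|_P$ with multiplicative constant $\sqrt{\kappa(P)}=c$. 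The specific prefactor $2\pi c^2/[\sqrt3(c^2-1)]$ you were hoping to recover then drops out of $\|\widehat J_{opt}\|_F^2=\lambda_{opt}^2\sum_{j\ne k}\big(\tfrac{\lambda_j+\lambda_k}{\lambda_j-\lambda_k}\big)^2$ after choosing $\lambda_k=\tfrac{d-1}{c^2-1}+k-1$ and bounding $\sum_{j\ne k}(j-k)^{-2}\le d\pi^2/3$; it is a feature of this Lyapunov construction, not of any averaging estimate.
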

In the proof we shall build upon the strategy from $\S 3$ in \cite{LNP}, and only deviate from their strategy in Step 2 below. Nevertheless we outline the full proof, to make it readable independently. 
\begin{proof}[Proof of Theorem \ref{theo:maintheo}(a).]
We recall that, given any matrix pair $(C,D)$ in $\IK$, we can rewrite the drift matrix $C$ (see Lemma \ref{lem:IKchar}) as 
$$
C=(D+J)K^{-1}=K^{1/2}(\widetilde{D} + \widetilde{J}) K^{-1/2},
$$
where $\widetilde{D}:= K^{-1/2} D K^{-1/2}$ and $\widetilde{J}=K^{-1/2}J K^{-1/2}$.
Moreover it is easy to check that the map $M \mapsto K^{-1/2} M K^{-1/2}$ is a bijection that leaves $\SMU$ and $ \mathcal{A}$ invariant.
We split the proof into three steps.

\textbf{Step 1}
We shall construct an optimal pair ($\widetilde{D}_{opt}, \widetilde{J}_{opt})$ and 
investigate the propagator norm of the ODE-evolution
\begin{equation}
\label{eq:ODEtilde}
\frac{d}{dt} x=- \widetilde{C}_{opt}x, \quad x_0:=x(0) \in \R^d, \ t \geq 0,
\end{equation}
where $\widetilde{C}_{opt}:= \widetilde{D}_{opt}+\widetilde{J}_{opt}$.
More precisely, we shall provide a decay estimate for
$|| e^{-\widetilde{C}_{opt}t } ||_{\mathcal{B}(\R^d)}$
by constructing an appropriate Lyapunov functional (following \S2.1 of \cite{AAC}).

\bea{Following the proof of Theorem 2.1 in \cite{GM} we recall that $D$ can enable the maximum decay rate $\lambda_{opt} :=  \max{ (\sigma(K^{-1})) }$, only if the range of $D$ is a subset of $\Omega$, i.e. the eigenspace of $K^{-1}$ corresponding to $\lambda_{opt}$. Hence we let}
 $v \in \R^d$ be a normalized eigenvector of $K^{-1}$ associated to $\lambda_{opt}$. As in \cite{GM} we define the rank-1 matrix $D_{opt}:=d (v \otimes v) \in \SMU$ with $\mathrm{Tr}(D_{opt})=d$.  It follows that
\begin{equation}
\label{eq:Dtilde2}
\widetilde{D}_{opt}= d K^{-1/2}(v \otimes v) K^{-1/2}=d \lambda_{opt} (v \otimes v) =\lambda_{opt} D_{opt}, 
\end{equation}
and hence 
$$\frac{ \mathrm{Tr}\left(\widetilde{D}_{opt}\right)}{d}=\lambda_{opt}.$$
\bea{For $\mathrm{dim}(\Omega) >1$, we remark that the choice of $\widetilde{D}_{opt}$ made in \eqref{eq:Dtilde2} is just \textit{one} simple option,  which enables the decay rate $\lambda_{opt}$.
}
For the construction of $\widetilde{J}_{opt} \in \mathcal{A}$ we use a particular basis of $\R^{d}$:
Let $\{ \psi_k \}_{k=1}^d$ be an orthonormal basis of $\R^d$ such that the following condition (Lemma 2, \cite{LNP}) is satisfied: for all $k\in \{1,...,d\}$,
\begin{equation}
\label{eq:cond39}
\langle \psi_k, \widetilde{D}_{opt} \psi_k \rangle= \frac{\mathrm{Tr}\left(\widetilde{D}_{opt}\right)}{d}=\lambda_{opt}.
\end{equation}
The existence of such basis is guaranteed by Proposition 3 in the same paper. 
The essence of the basis $\{\psi_k\}_{k=1}^d$ is to provide an equidistribution of $\mathrm{Tr}\left(\widetilde{D}_{opt}\right)$  into the directions $\{\psi_k\}_{k=1}^d$, while $\widetilde{D}_{opt}$ has only rank 1. This is the starting point to enable a uniform (in $x_0$ and $t$) decay estimate of all trajectories of \eqref{eq:ODEtilde},  see \eqref{eq:decayfortheODE} below.
We observe that in \cite{LNP} the hypotheses of Proposition 3 require $\widetilde{D}_{opt}$ to be invertible.  However,  this condition can be weakened to $\widetilde{D}_{opt} \in \SMU$, as already pointed out in \cite{GM}: $\widetilde{D}_{opt}+\epsilon I_d \in \SM$, and $\epsilon \rightarrow 0^+$  yields the above result. 

Next, let $0<\lambda_1< \cdots < \lambda_d$ be arbitrary numbers in $\R$ that will be chosen later in a suitable way.
We define the matrix $\widetilde{J}_{opt}:= \Psi \widehat{J}_{opt} \Psi^{-1} \in \mathcal{A}$, with $\Psi:=\left [\psi_1,...,\psi_d \right ]$ and $\widehat{J}_{opt}$ is the anti-symmetric matrix with elements (as in Lemma 2, \cite{LNP}):
\begin{equation}
\label{def:J}
\left (\widehat{J}_{opt} \right )_{j,k}:=\frac{\lambda_j+ \lambda_k}{\lambda_j-\lambda_k} \langle \psi_j, \widetilde{D}_{opt} \psi_k \rangle, \qquad \forall j \neq k, 
\end{equation}
and 0 else. 

Now, the strategy consists in finding a suitable symmetric matrix $P \in \SM$ that defines a modified norm $||\cdot||_P$ in $\R^d$ such that the trajectories of the ODE \eqref{eq:ODEtilde} decay with pure exponential decay rate $\lambda_{opt}$ w.r.t.\ this norm. 

\textbf{Step 2}
Let us proceed with the construction of its inverse matrix $Q:=P^{-1} \in \SM$. 
We define
$Q:= \Psi \Lambda \Psi^{-1}$, with the matrix $\Lambda:= \mathrm{diag}{(\lambda_1,...,\lambda_d)}$. We observe that $Q \in \SM$ due to the orthonormality of $\Psi$ and the positivity of $\lambda_i$. Moreover, by definition, $Q$ has the  eigenvectors $\psi_i$ and eigenvalues $\lambda_i$.
By using Lemma 2, \cite{LNP} (or a straightforward computation using \eqref{eq:cond39}) the following Lyapunov equation holds for $Q$:
\begin{equation}
\label{eq:lyap}
\widetilde{J}_{opt}Q-Q\widetilde{J}_{opt}=-Q \widetilde{D}_{opt}-\widetilde{D}_{opt}Q+2  \lambda_{opt} Q.
\end{equation}
Let us define the modified norm $||x||^2_P:=\langle x, Px \rangle$ on $\R^d$, where $P:=Q^{-1} \in \SM$.  Differentiating this norm along a trajectory of the ODE \eqref{eq:ODEtilde} we obtain with \eqref{eq:lyap}, multiplied on either side by $P=Q^{-1}$: 
\begin{equation}
\frac{d}{dt}||x(t)||^2_P=- \Big\langle x(t),\left[P \left(\widetilde{D}_{opt}+\widetilde{J}_{opt} \right)+\left(\widetilde{D}_{opt}-\widetilde{J}_{opt}\right)P\right] x(t) \Big\rangle 
=-2 \lambda_{opt} ||x(t)||^2_P.
\end{equation}
Hence the modified norm decays with rate $\lambda_{opt}$, i.e.
\begin{equation}
\label{eq:decayfortheODE}
||x(t)||_P^2 =e^{-2\lambda_{opt} t } ||x(0)||_P^2, \qquad t \geq 0.
\end{equation}
Transforming to the Euclidean vector norm, we obtain for the propagator 
\begin{equation}
\label{eq:decayEucl}
\left | \left |e^{-\widetilde{C}_{opt}t} \right | \right |_{\mathcal{B}(\R^d)} \leq \sqrt{\kappa (P)}\,e^{- \lambda_{opt} t}, \qquad t \geq 0,
\end{equation}
where $\kappa(P)$ denotes the condition number of the matrix $P$.

\textbf{Step 3}
The multiplicative constant appearing in \eqref{eq:decayEucl} can be adjusted by choosing the eigenvalues of $P$ in the following way: Given any $c>1$, and due to the fact that $\kappa(P)=\kappa(Q)=\frac{\lambda_d}{\lambda_1}$, it is sufficient to choose $\lambda_d$ and $\lambda_1$ such that their quotient is (less or) equal to $c^2$. The remaining parameters $\lambda_2<...<\lambda_{d-1}\in(\lambda_1,\lambda_d)$ could be freely chosen at this point, but assigning them a precise value will be crucial in the proof of part (b).

To summarize, we have proved so far that, for any prescribed $c>1$, there exists a pair  of matrices $\widetilde{J}_{opt} \in \mathcal{A}$ and $\widetilde{D}_{opt} \in \SMU$ such that
\begin{equation}
\label{eq:optestimateOde}
\left | \left |e^{-\widetilde{C}_{opt}t} \right | \right |_{\mathcal{B}(\R^d)}  \leq c e^{-\lambda_{opt} t}, \qquad t \geq 0.
\end{equation}
We conclude the proof by combining Theorem \ref{theo:theoAAS} applied to the operator
\\
$e^{-L_{C_{opt},D_{opt}}t}$, and the above inequality \eqref{eq:optestimateOde}.
\end{proof}
\bea{For $K= \alpha I_d$, we remark that a trivial modification of the above proof admits the choice $(C_{opt}, D_{opt})=(K^{-1}, I_d)$, $J=0$,  $P=I_d$. In this case the reversible dynamics is already optimal with $\lambda_{opt}=\alpha^{-1}$ and $c=1$ in \eqref{eq:mainIn}. Moreover, $\left | \left | C \right | \right |_{\mathcal{F}}=\lambda_{opt} \sqrt{d}$.
}

\begin{proof}[Proof of Theorem \ref{theo:maintheo}(b).]
First we compute the Frobenius norm of $D_{opt}:=d (v \otimes v)$, with $v \in \R^d$ normalized eigenvector of $K^{-1}$:
\begin{equation}
\label{eq:estimFrobD}
\left | \left | D_{opt} \right | \right | ^2_{\mathcal{F}}=d^2\,\mathrm{Tr} \left( D_{opt}^2 \right) 
=d^2||v||_2^4=d^2.
\end{equation}

For estimating $\left | \left |C_{opt} \right | \right | _{\mathcal{F}}$ we recall 
$C_{opt}=D_{opt} K^{-1} + K^{1/2} \widetilde{J}_{opt} K^{-1/2}$, which implies
using the inequality $||A\,B||_\F\le ||A||_\F ||B||_{\mathcal{B}(\R^d)}$ (see \cite{HJ}, p.\ 364):
 \begin{equation}
 \label{eq:linkFrobtilde}
 \left | \left |C_{opt} \right | \right | _{\mathcal{F}} \leq 
\left | \left |D_{opt} \right | \right | _{\mathcal{F}} \max(\sigma(K^{-1}))+ \sqrt{\kappa(K)}\left | \left |\widetilde{J}_{opt} \right | \right | _{\mathcal{F}} \,.
 \end{equation}
%
Since the Frobenius norm is unitarily invariant and $\widetilde{J}_{opt}=\Psi \widehat{J}_{opt} \Psi^{-1}$ we have  $\left | \left |\widetilde{J}_{opt} \right | \right | _{\mathcal{F}}=\left | \left |\widehat{J}_{opt} \right | \right | _{\mathcal{F}}$. 
For any $k =1,...,d$ we define $\alpha_k:=\langle v , \psi_k \rangle$ and we observe that $\alpha_k^2=\frac{1}{d}$:  Indeed from \eqref{eq:cond39} it follows that
$$
\lambda_{opt}=\langle \psi_k , \widetilde{D}_{opt} \psi_k \rangle = d \lambda_{opt} \alpha_k^2.
$$
Hence we can rewrite \eqref{def:J} as 
\begin{equation}
\left( \widehat{J}_{opt} \right )_{j,k}
=\frac{\lambda_j+\lambda_k}{\lambda_j-\lambda_k} d \lambda_{opt} \alpha_j \alpha_k\,, \qquad \forall j \neq k. 
\end{equation}
It follows that 
\begin{equation}
\label{eq:JhatFrob}
\left | \left |\widehat{J}_{opt} \right | \right | _{\mathcal{F}}^2
=d^2 \lambda_{opt}^2\sum_{j \neq k=1}^d \left(\frac{\lambda_j+\lambda_k}{\lambda_j-\lambda_k} \right )^2 \alpha_j^2 \alpha_k^2
=\lambda_{opt}^2\sum_{j \neq k=1}^d \left(\frac{\lambda_j+\lambda_k}{\lambda_j-\lambda_k} \right )^2 .
\end{equation}

Next we choose the parameters $\lambda_k$, $k=1,...,d$ as 
\begin{equation}\label{lambda-AS}
  \lambda_k:=\frac{d-1}{c^2-1} +k-1, 
\end{equation}
and they satisfy $0<\lambda_1<\cdots <\lambda_d$ and $\frac{\lambda_d}{\lambda_1} = c^2$. 
Moreover we have for $j\ne k$: $(\lambda_j+\lambda_k)^2  <(2\lambda_d)^2= 4 \left( \frac{c^2}{c^2-1} \right )^2 (d-1)^2$, which implies with \eqref{eq:JhatFrob}: \begin{equation}
\label{eq:hatJsum}
\left | \left | \widehat{J}_{opt} \right | \right | _{\mathcal{F}}^2 \leq \lambda_{opt}^2 4 \left( \frac{c^2}{c^2-1} \right )^2 (d-1)^2 \sum_{ j\neq k=1}^d \frac{1}{(\lambda_j-\lambda_k)^2}.
\end{equation}
With the following estimate of a hyperharmonic series
\begin{equation*}
\sum_{j \neq k=1}^d \frac{1}{(\lambda_j-\lambda_k)^2}=\sum_{j=1}^d \sum_{\substack{k=1 \\ k\neq j}}^d \frac{1}{(j-k)^2}=\sum_{j=1}^d \left ( \sum_{l=1}^{j-1} \frac{1}{l^2} + \sum_{l=1}^{d-j} \frac{1}{l^2} \right ) \leq \sum_{j=1}^d \frac{\pi^2}{3} =d \frac{\pi^2}{3},
\end{equation*}
and \eqref{eq:hatJsum} we obtain
\begin{equation}
\label{eq:lasthatJopt}
\left | \left | \widehat{J}_{opt} \right | \right | _{\mathcal{F}}\leq \lambda_{opt} \frac{2\pi}{\sqrt{3}}   \frac{c^2}{c^2-1} \sqrt d(d-1).
\end{equation}
We conclude the proof by combining the (in)equalities
\eqref{eq:estimFrobD}, \eqref{eq:linkFrobtilde}, 
and \eqref{eq:lasthatJopt}.
\end{proof}

\bea{
Let us briefly compare the strategy of proofs for Theorem \ref{theo:maintheo} here and for Theorem 2.2 in \cite{GM}: The main difference concerns how to connect the evolution of the drift ODE to the FP-equation (here via the equality of the propagator norms, and via a hypocoercive entropy method in \cite{GM}). Further, our choice of the parameters $\lambda_k$ is (slightly) improved compared to the choice 
\begin{equation}\label{lambda-GM}
  \lambda_k=d+k, 
\end{equation}
in \cite[Remark 7]{LNP} and \cite{GM}. Finally, the proof of Theorem \ref{theo:maintheo}(b) provides a refined estimate of $\left | \left |C_{opt} \right | \right | _{\mathcal{F}}$.
}

\begin{remark}\label{rem:not-unique}
We note that, for any $c>1$, an optimal matrix pair $(C_{opt}(c),\,D_{opt}(c))$ is {\it not} unique: Using in the proof of Theorem \ref{theo:maintheo} the matrices $\widetilde{J}_{opt}^T$, $\widetilde{C}_{opt}^T$ instead of, respectively, $\widetilde{J}_{opt}$, $\widetilde{C}_{opt}$ and the norm $||\cdot ||_Q$ instead of $||\cdot ||_P$ yields another non-symmetric FP-equation that satisfies the same estimates \eqref{eq:mainIn}, \eqref{eq:InFrobenius}.
\end{remark}

\end{section}

\bea{
\section{Examples and numerical illustrations} \label{sec4}
In this section we shall illustrate the results of \S\ref{sec3}. For an explicit example in $\R^2$ we shall give a plot of the exact propagator norm for the FP-equation, which is accessible due to Theorem \ref{theo:theoAAS} for constant-in-time coefficients and due to Theorem  \ref{theo:t-dep} for the time-dependent case. First of all we shall illustrate Theorem \ref{theo:maintheo}(a), particularly focussing on the multiplicative constant in the exponential decay estimate \eqref{eq:mainIn}.
}

\bea{
\subsection{Optimal decay estimates}\label{sec:4.1}
As a first example we }
consider the covariance matrix $K=\mathrm{diag}(1,2) \in \R^{2 \times 2}$. Then the maximum decay rate for FP-equations that converge to $f_{\infty,K}$ is $\lambda_{opt}=\min(\sigma(K))^{-1}=1$. Next we shall construct one optimal pair of matrices $(C_{opt}, D_{opt})$ such that $e^{-L_{C_{opt},D_{opt}}t}f_0$ converges to $f_{\infty,  K}$ with decay rate $\lambda_{opt}$ and with a multiplicative constant arbitrary close to one.  For any $c>1$ we choose real numbers $0<\lambda_1< \lambda_2 $ such that $\frac{\lambda_2}{\lambda_1}= c^2$. We abbreviate 
\bea{$\mu:=\frac{\lambda_2+\lambda_1}{\lambda_2-\lambda_1}=\frac{c^2+1}{c^2-1}>1$.}  Following the procedure described in the proof of Theorem \ref{theo:maintheo}(a) we first compute $D_{opt}=\widetilde{D}_{opt}=\mathrm{diag}(2,0) \in \R^{2\times 2}$. An orthonormal basis of $\R^2$  satisfying condition \eqref{eq:cond39} is given by $\psi_1:=\frac{1}{\sqrt{2}}(1,1)^T$ and $\psi_2:=\frac{1}{\sqrt{2}}(-1,1)^T$. This defines the anti-symmetric matrix
\bea{
$\widetilde{J}_{opt}=\widehat{J}_{opt}=\begin{pmatrix}
0 & \mu
\\
-\mu & 0 
\end{pmatrix}.
$
Finally we compute 
\begin{equation}
C_{opt}=\begin{pmatrix}
2 & \frac{\mu}{\sqrt{2}}
\\
-\sqrt{2} \mu & 0 
\end{pmatrix},
\text{ \quad and \quad}
\widetilde{C}_{opt}=\begin{pmatrix}
2 & \mu
\\
 -\mu & 0
\end{pmatrix}.
\end{equation}
}

The spectral gaps of the drift matrix $C_{opt}$ and the operator $L_{C_{opt}, D_{opt}}$ coincide and are equal to 1. $C_{opt}$ has the two distinct eigenvalues $1\pm i\sqrt{\mu^2-1}$ (because \bea{$\mu>1$}), which are also eigenvalues of $L_{C_{opt}, D_{opt}}$ (see Theorem 5.3 in \cite{AE} or Proposition 10 in \cite{LNP}). Hence, $\lambda_{opt}=1$ is indeed the largest possible, uniform decay rate of the FP-propagator $e^{-L_{C_{opt}, D_{opt}}t}$ on $V_0^\perp$.

Thanks to Theorem \ref{theo:theoAAS} we can reduce the evaluation of the multiplicative constant $c$ in the decay estimate \eqref{eq:hypowithJ}
to the study of the propagator norm of the associated drift ODE $\frac{d}{dt}{x}=-\widetilde{C}_{opt}x$.  In Theorem 3.7,  \cite{AAS} the authors provide the explicit form of the best multiplicative constant for an ODE $\frac{d}{dt}{x}=-Ax$ in $\R^2$ when the matrix $A \in \R^{2\times 2}$ is positive stable, diagonalizable and $\Re \tau_1=\Re \tau_2 $, with $\tau_j$, $j=1,2$ the eigenvalues of $A$: Then the best constant $c_{min}$ in the exponential decay estimate for $e^{-At}$ is given by 
\begin{equation}
\label{eq:bestCinAAS}
c_{min}=\sqrt{\frac{1+\alpha}{1-\alpha}}, \qquad \alpha:=\left | \left  \langle \frac{v_1}{||v_1||_2} , \frac{v_2}{||v_2||_2} \right \rangle \right |,
\end{equation}
where $v_i \in \CC^2$, $i=1,2$ denote the eigenvectors of $A$.
Since $\widetilde{C}_{opt}$ satisfies the hypotheses of Theorem 3.7 in \cite{AAS},  
a straightforward computation gives $\alpha=\frac{1}{|\mu|}$, and the best multiplicative constant is $c_{min}=\sqrt{\frac{\lambda_2}{\lambda_1}}= c$, coinciding with the statement of Theorem \ref{theo:maintheo}(a). 

We observe that $c\searrow1$ implies \bea{$\mu\to\infty$}. This corresponds to the {\it high-rotational limit} in the drift matrix of the FP-equation 
$$
 \partial_{t} f_t = \mathrm{div}_{x}{(D_{opt}\nabla_x f_t+C_{opt}(\mu)xf_t)}.
$$
For increasing $|\mu|$, the latter tends to mix \bea{with increasing speed} the dissipative and non-dissipative directions (i.e.\ $x_1$ and $x_2$, respectively) of the corresponding symmetric FP-equation (i.e.\ with $\mu=0$).

As stated in Remark \ref{rem:not-unique}, replacing $\mu$ by $-\mu$ yields another FP-equation with the same optimal decay behavior. Only the rotational direction is then reversed.

\bea{
\subsection{Numerical illustrations: time-independent FP-equations}\label{sec:4.2}
To illustrate the construction of optimal coefficient matrices in Theorem \ref{theo:maintheo}(a) we revisit the 2D-example from \cite{GM}, i.e.\ $K=\diag(1/\eps,1)$, $\eps=0.05$ which admits the optimal decay rate $\lambda_{opt}=1$. For any given multiplicative constant $c>1$, the optimal coefficient matrices constructed in Theorem \ref{theo:maintheo}(a) read:
\begin{equation}\label{opt-matrices}
  D_{opt}=\widetilde D_{opt}=\begin{pmatrix}
0 & 0
\\
 0 & 2
\end{pmatrix}, \quad
  C_{opt}=\begin{pmatrix}
0 & -\frac{\mu}{\sqrt{\eps}}
\\
\sqrt{\eps} \mu & 2 
\end{pmatrix}, \quad 
\widetilde{C}_{opt}=\begin{pmatrix}
0 & -\mu
\\
 \mu & 2
\end{pmatrix},\quad \mu:=\frac{c^2+1}{c^2-1}.
\end{equation}
}

\begin{figure}[htbp]
\includegraphics[scale=0.7]{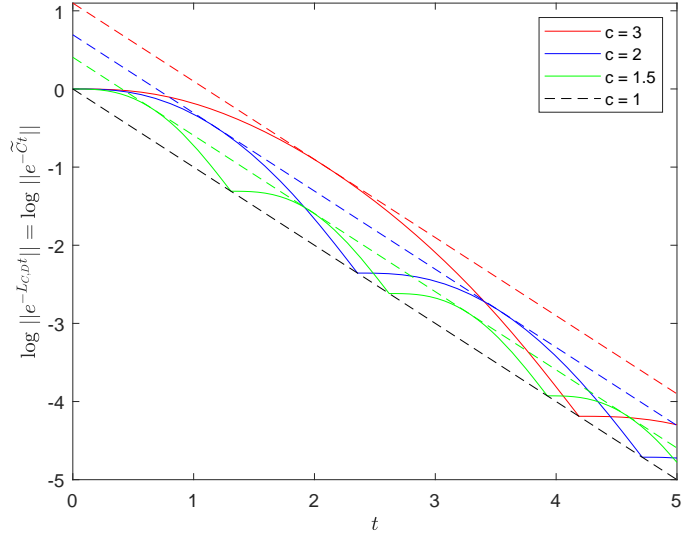}
\caption{\bea{The solid curves show the FP- and ODE-propagator norms as functions of $t$ for 3 values of the multiplicative parameter:  $c=3,\,2,\,1.5$ (top to bottom). The dashed curves give the corresponding (sharp) exponential bound of the form $c e^{-\lambda_{opt}t}$ for the 3 cases. The dashed black curve shows the exponential bound in the high-rotational limit, i.e.\ for $c\searrow1$. 
Colors only online.} }
\label{fig1}
\end{figure}

\bea{
In Figure\ref{fig1} we present the exact propagator norms (as a function of time) of the FP-equation and of its drift ODE, i.e.\
\begin{equation}\label{prop-norms}
    \left\|e^{-L_{C,D}t}\right\|_{\mathcal{B}(V_0^{\perp})} = \left\|e^{-\widetilde{C}t}\right\|_{\mathcal{B}(\R^d)}, \quad t\ge0
\end{equation}
for several prescribed values of the multiplicative constant: $c=1.5,\,2,\,3$. This figure includes also the r.h.s.\ of the corresponding exponential decay estimate \eqref{eq:mainIn}, using a logarithmic scale for the ordinate axis. Being the exact upper envelops, this reveals that this estimate is indeed sharp, concerning both the exponential rate and the multiplicative constant. 
Also note that each curve of the propagator norm periodically touches (from above) the curve corresponding to the high-rotational limit, given by $e^{-\lambda_{opt}t}$.\\
}

\bea{
Continuing with the same example, we shall next compare the results from Theorem \ref{theo:maintheo}(a) here and Theorem 2.2 in \cite{GM}. 
First we need to explain the criterion of comparison: 
For both results, and for a given constant $c>1$ we seek a matrix pair $(C,D)$ such that the inequality \eqref{eq:mainIn} holds. Since $c_{inf}=1$, such a pair can always be found, but $\|C\|_\F$ becomes large as $c\searrow1$ (see \S\ref{sec:4.1}). So, asking \eqref{prop-norms} to be close to the high-rotational limit $e^{-\lambda_{opt}t}$ {\it cannot} be a useful criterion. Instead, for given $c>1$ we want to find $(C,D)\in\I(K)$ such that \eqref{eq:mainIn} holds and $\|C\|_\F$ is minimal. This has also a practical implication for solving the FP-equation \eqref{eq:FP} numerically: $\|C\|_\F$ ``small'' allows to use ``large'' time steps.\\
\indent
For fixed $c=\sqrt 2$, Theorem \ref{theo:maintheo}(a) here and Theorem 2.2 in \cite{GM} yield, respectively, 
$$
  \widetilde{C}_{opt}^{AS}=\begin{pmatrix}
0 & -3
\\
 3 & 2
\end{pmatrix},\quad
\widetilde{C}_{opt}^{GM}=\begin{pmatrix}
0 & -7
\\
 7 & 2
\end{pmatrix},
$$
with $\|{C}_{opt}^{AS}\|_\F=\sqrt{184.45}$ and  
$\|{C}_{opt}^{GM}\|_\F=\sqrt{986.45}$~.
The essential difference stems from the different choices of $\lambda_1$ and $\lambda_2$, \eqref{lambda-AS} vs.\ \eqref{lambda-GM}. In Theorem \ref{theo:maintheo}(a), the estimate \eqref{eq:mainIn} is sharp, and hence the corresponding plot of the propagator norm has $\sqrt 2\,e^{-t}$, i.e.\ the r.h.s.\ in \eqref{eq:mainIn}, as its upper envelop (see Figure \ref{fig2}, left). Since the estimate from Theorem 2.2 in \cite{GM} is not sharp, the anti-symmetric part of $\widetilde{C}_{opt}^{GM}$ is larger than ``necessary'', and hence the corresponding plot of the propagator norm stays well below the estimate $\sqrt 2\,e^{-t}$. With a view towards numerical applications the latter is rather disadvantageous.}

\begin{figure}[htbp]
\begin{center}
\includegraphics[scale=0.42]{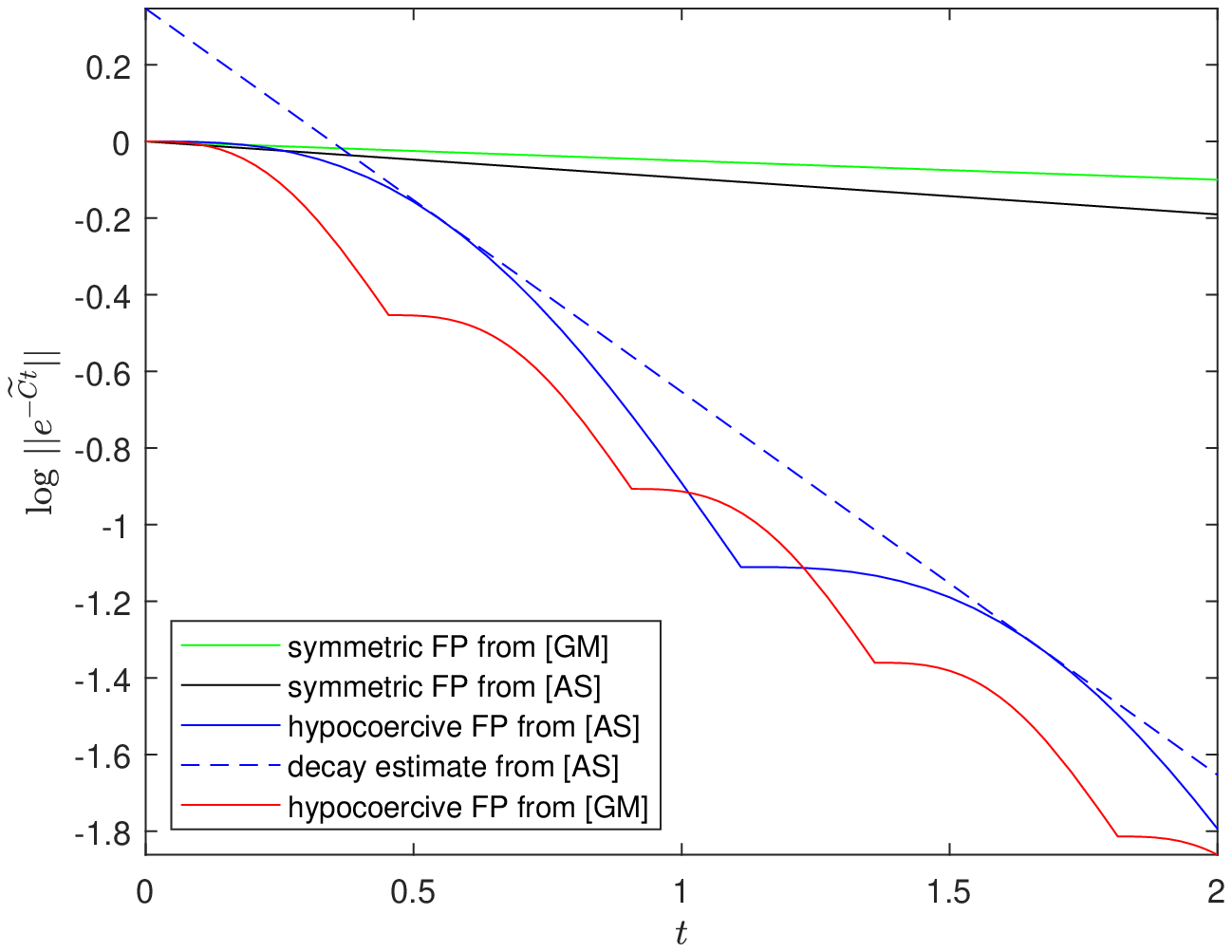}
\hfill
\includegraphics[scale=0.42]{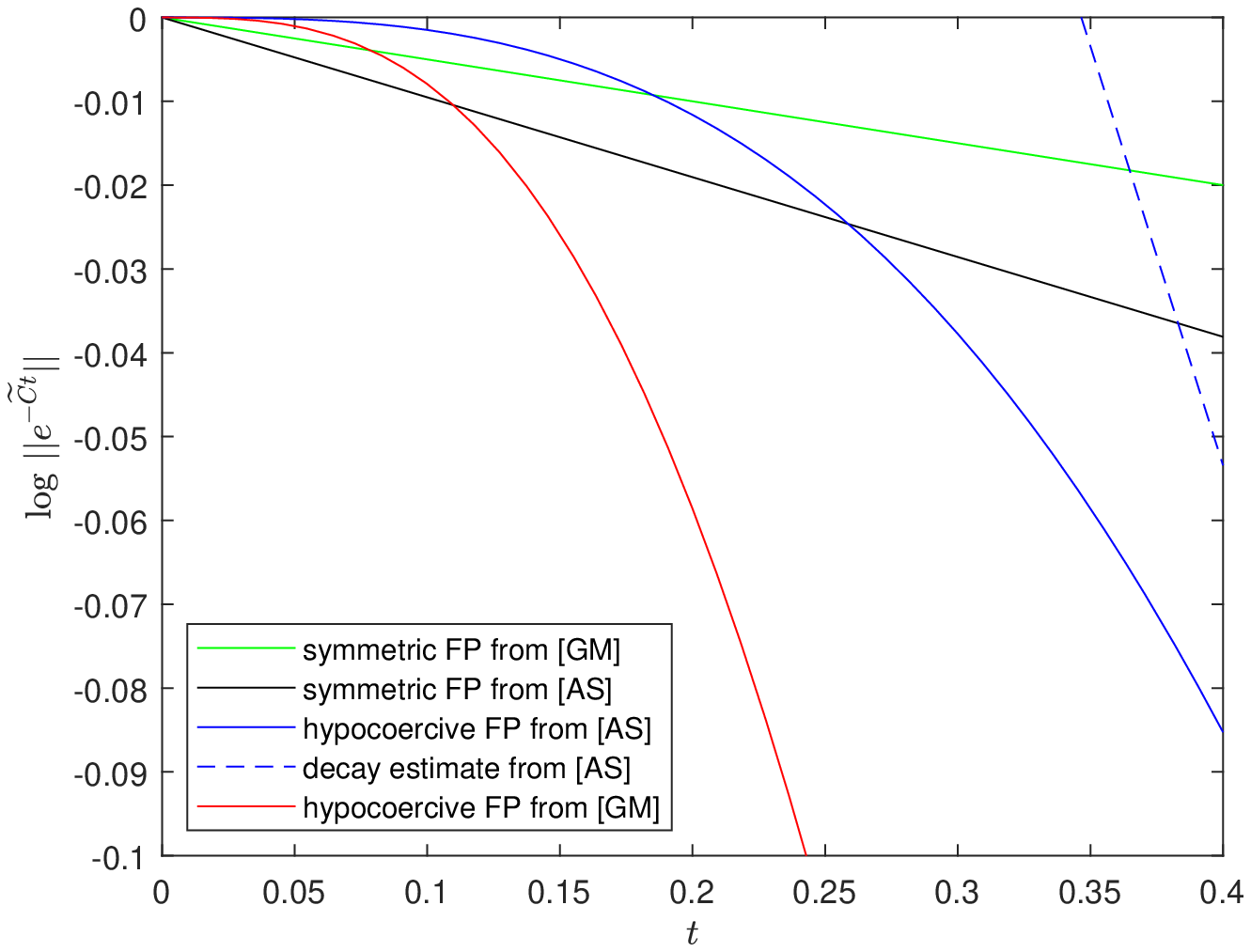}
\end{center}
\caption{\bea{Left: For $c=\sqrt2$, the solid blue and red curves show the FP- and ODE-propagator norms as functions of $t$ for the hypocoercive FP-equations constructed, respectively, in Theorem \ref{theo:maintheo}(a) here and Theorem 2.2 in \cite{GM}. The dashed blue curve gives the corresponding exponential bound $\sqrt2\, e^{-t}$; it is sharp for Theorem \ref{theo:maintheo}(a). The solid green and black curves show the FP- and ODE-propagator norms for the symmetric FP-equations in \eqref{splitFP} and \eqref{eq:FPopt}, respectively.\\
Right: a zoom of the plot, close to $t=0$. Colors only online.}  }
\label{fig2}
\end{figure}

\bea{
Figure \ref{fig2} also shows the decay of the propagator norm of the symmetric FP-equations in \eqref{splitFP} and \eqref{eq:FPopt}. Their respective decay rates are $\frac{1}{\lambda_{max}(K)}=\eps$ and $\frac{d}{\tr(K)}=\frac{2\eps}{1+\eps}$, both well below $\lambda_{opt}=1$, the rate of the optimal hypocoercive FP-equations.
}

\bea{
\subsection{Numerical illustrations: time-dependent FP-equations}\label{sec:4.3}
In \cite{GM} the authors used a FP-equation of the split form \eqref{splitFP} with piecewise constant coefficient matrices in order to approach the given equilibrium quickly. Following this approach, we shall next discuss if time-dependent coefficient matrices $C(t)$, $D(t)$ can accelerate the convergence in FP-equations, compared to the case of constant matrices $C$, $D$ that was analyzed in \S\ref{sec3}.\\
\indent
As a first step we shall show that the initial decay of hypocoercive FP-evo\-lu\-tions, as constructed in the proof of Theorem \ref{theo:maintheo}(a) (recall that $\rank(\widetilde D_{opt})=1$), can always be improved, e.g.\ in the spirit of the split FP-equation \eqref{splitFP} proposed in \cite{GM}. The following lemma gives, at $t=0$, the largest possible decay rate of the FP-equation \eqref{eq:FP} as well as of its drift ODE $\dot x=-\widetilde C x$ (both when considering their propagator norms).
\begin{lemma}\label{lem:0-decay}
Let $K\in\SM$ be given. For any $(C,D)\in\I(K)$, the maximum decay rate of $\left\|e^{-L_{C,D}t}\right\|_{\mathcal{B}(V_0^{\perp})}$ at $t=0$ equals $\frac{d}{\tr(K)}$. It is obtained by the symmetric FP-equation with $C=\frac{d}{\tr(K)}\,I_d$ and $D=\frac{d}{\tr(K)}\,K$.
\end{lemma}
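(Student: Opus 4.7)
The plan is to reduce the statement to a finite-dimensional optimization via Theorem \ref{theo:theoAAS}, and then to read off the initial decay rate of the drift ODE from a second-order Taylor expansion around $t=0$. By the propagator-norm identity \eqref{eq:equalityASS}, the FP-propagator on $V_0^\perp$ and the matrix exponential $e^{-\widetilde C t}$ with $\widetilde C := K^{-1/2}CK^{1/2}$ have the same operator norm for every $t\ge 0$, hence the same initial decay rate. So I may work entirely at the ODE level.

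Using Lemma \ref{lem:IKchar}, I would write $\widetilde C = \widetilde D + \widetilde J$, where $\widetilde D := K^{-1/2}DK^{-1/2} \in \SMU$ and $\widetilde J := K^{-1/2}JK^{-1/2} \in \AAA$; note that the conjugation by $K^{-1/2}$ preserves symmetry and antisymmetry. A direct expansion then yields
\[
e^{-\widetilde C^{T}t}\,e^{-\widetilde C t} \;=\; I_d - t(\widetilde C + \widetilde C^T) + O(t^{2}) \;=\; I_d - 2t\,\widetilde D + O(t^{2}),
\]
since the antisymmetric part drops out at the linear order. Taking the largest eigenvalue gives $\|e^{-\widetilde C t}\|_{\mathcal B(\R^d)}^{2} = 1 - 2t\,\lambda_{\min}(\widetilde D) + O(t^{2})$, so the common initial decay rate at $t=0$ equals $\lambda_{\min}(\widetilde D)$.

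It then remains to maximize $\lambda_{\min}(\widetilde D)$ over the admissible set. The constraint $\tr(D)\le d$ translates to $\tr(K\widetilde D)\le d$, and the positivity $\widetilde D - \lambda_{\min}(\widetilde D)\,I_d \succeq 0$ combined with $K\succ 0$ gives the trace inequality
\[
d \;\ge\; \tr(K\widetilde D) \;\ge\; \lambda_{\min}(\widetilde D)\,\tr(K),
\]
whence $\lambda_{\min}(\widetilde D)\le d/\tr(K)$. Equality forces $\widetilde D = \tfrac{d}{\tr(K)}\,I_d$, i.e.\ $D = \tfrac{d}{\tr(K)}\,K$, and the simplest admissible choice $J=0$ yields $C = DK^{-1} = \tfrac{d}{\tr(K)}\,I_d$, recovering the symmetric FP-equation announced in the statement.

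The only delicate point is passing from the matrix expansion $I_d - 2t\widetilde D + O(t^{2})$ to an expansion of its largest eigenvalue, which is standard analytic perturbation theory and is particularly transparent here because the leading term is a multiple of the identity, so the first-order correction is exactly $-2\lambda_{\min}(\widetilde D)$. If one prefers to bypass perturbation theory altogether, the same conclusion follows from $\sup_{\|x\|_2=1}\|e^{-\widetilde C t}x\|_2^2 = 1 - 2t\,\inf_{\|x\|_2=1}\langle x, \widetilde D x\rangle + O(t^{2})$ with a uniform remainder, and the identity $\inf_{\|x\|_2=1}\langle x,\widetilde D x\rangle = \lambda_{\min}(\widetilde D)$. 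I do not anticipate any further obstacle.
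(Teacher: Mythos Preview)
Your proof is correct and follows the same overall route as the paper: reduce to the drift ODE via Theorem~\ref{theo:theoAAS}, identify the initial decay rate of $\|e^{-\widetilde C t}\|_{\mathcal B(\R^d)}$ as $\lambda_{\min}(\widetilde C_s)=\lambda_{\min}(\widetilde D)$ by a first-order expansion, and then optimize $\lambda_{\min}(\widetilde D)$ under the trace constraint. The one substantive difference is in the optimization step. The paper argues in two stages by contradiction: first it shows the constraint $\tr(D)\le d$ must be saturated (otherwise one could enlarge $\widetilde C_s$), then it shows the optimizer must be a multiple of $I_d$ (otherwise one could subtract $\widetilde C_s-\lambda_{\min}(\widetilde C_s)I_d$ without lowering the smallest eigenvalue). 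Your single trace inequality $d\ge \tr(K\widetilde D)\ge \lambda_{\min}(\widetilde D)\,\tr(K)$, obtained from $\widetilde D\succeq\lambda_{\min}(\widetilde D)I_d$ and $K\succ 0$, collapses both steps into one line and immediately gives the equality case. This is a genuine simplification of that part of the argument; the rest is identical in spirit.
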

\begin{proof}
Due to Theorem \ref{theo:theoAAS} we want to maximize the decay of the corresponding ODE-propagator norm,
$$
  \left\|e^{-\widetilde{C}t}\right\|_{\mathcal{B}(\R^d)}^2 = \lambda_{max}\left( e^{-\widetilde{C}^Tt} e^{-\widetilde{C}t} \right)
$$
at $t=0$. A Taylor expansion yields
\begin{equation}\label{taylor}
  \left\|e^{-\widetilde{C}t}\right\|_{\mathcal{B}(\R^d)} = 1 -\lambda_{min}(\widetilde C_s)\,t+\mathcal O(t^2)\quad \mbox{ as }t\to0,
\end{equation}
where $\widetilde C_s:=\frac12 (\widetilde C+\widetilde C^T)$ is the symmetric part of $\widetilde C$. We recall from the proof of  Theorem \ref{theo:maintheo}(a) that $\widetilde C:=K^{-1/2}CK^{1/2}$ and $\widetilde D:=K^{-1/2}DK^{-1/2}=\widetilde C_s\ge0$.\\
\indent
Thus we are led to the following optimization problem:
Find $\widetilde C_s\in\SMU$ with
\begin{equation}\label{trD}
  \tr(D)=\tr(K^{1/2}\widetilde C_s K^{1/2})=:\tau\le d,
\end{equation}
such that $\lambda_{min}(\widetilde C_s)$ is maximal. Since $\widetilde J$, the anti-symmetric part of $\widetilde C$, does not appear within this problem, we set it to 0, for simplicity.\\
\indent
For such an optimal $\widetilde C_s$, \eqref{trD} actually has to be an equality: Otherwise we would have
$$
  K^{1/2}\widetilde C_s K^{1/2}\le\tau\,I_d<d\,I_d
$$
and the matrix $\widetilde C_s$ could be ``enlarged'', e.g.\ by the matrix
$$
  A:=\frac{d-\tau}{d^2-\tau} (d\,K^{-1}-\widetilde C_s)\in\SM.
$$
Then, $\widetilde C_s +A$ still satisfies the constraint \eqref{trD}:
$$
  \tr\left(K^{1/2}[\widetilde C_s +A]K^{1/2}\right)=d,
$$
but $\lambda_{min}(\widetilde C_s+A)>\lambda_{min}(\widetilde C_s)$, contradicting the optimality of $\widetilde C_s$. \\
\indent
Next we shall prove that the optimal matrix satisfies 
\begin{equation}\label{Copt}
  \widetilde C_s=\widetilde C=\frac{d}{\tr(K)}\,I_d=C.
\end{equation}
If the optimal $0\ne\widetilde C_s\in\SMU$ was not proportional to $I_d$, we could ``reduce'' $\widetilde C_s$ by the matrix 
$$
  B:=\widetilde C_s-\lambda\,I_d\ge0\quad
  \mbox{ with } \lambda:=\lambda_{min}(\widetilde C_s),
$$ 
without changing the smallest eigenvalue. 
Moreover $\widetilde C_s-B=\lambda\,I_d$ satisfies
$$
  \tr\left(K^{1/2}[\lambda\,I_d]K^{1/2}\right) =
  \tr(K^{1/2}\widetilde C_sK^{1/2}) - \tr(K^{1/2}BK^{1/2})\le d, 
$$
and hence $\lambda\,I_d$ is another optimal matrix of the above optimization problem. From the equality requirement in \eqref{trD} and $\tr(K^{1/2}\widetilde C_sK^{1/2})=d$ we then conclude $B=0$. Hence $\widetilde C_s$ is proportional to $I_d$, and equality in \eqref{trD} yields
$\lambda_{min}(\widetilde C_s)=\frac{d}{\tr(K)}$, finishing the proof.
\end{proof}
With this lemma we can identify the symmetric FP-equation with steady state $f_{\infty,K}$ that exhibits maximum initial decay as
\begin{equation}
\label{eq:FPopt}
 \partial_{t} f_t = \frac{d}{\tr(K)} \mathrm{div}_{x}{(K\nabla_x f_t+xf_t)}, \qquad x \in \R^d, \ t \in (0,\infty).
\end{equation}
Its initial decay rate, $\frac{d}{\tr(K)}$ is larger then that of \eqref{splitFP}, namely $\frac{1}{\lambda_{max}(K)}$. We recall that the optimal FP-equations constructed in the proof of Theorem  \ref{theo:maintheo}(a) are all hypocoercive, satisfying $\rank(\widetilde D_{opt})=1$, where $\widetilde D_{opt}=(\widetilde C_{opt})_s$. Hence $\lambda_{min}\big((\widetilde C_{opt})_s\big)=0$, and the corresponding propagator norm behaves like $1+\mathcal O(t^2)$, see \eqref{taylor}. Therefore it is obvious that, for small time, the symmetric FP-equations \eqref{splitFP} and \eqref{eq:FPopt} both decrease the FP-propagator norm faster than the hypocoercive FP-evolutions from Theorem  \ref{theo:maintheo}(a). This is illustrated on a 2D example in Figure \ref{fig2}, right.\\
}

\bea{
For the rest of this section we shall base our discussion of using time-de\-pen\-dent coefficients on the concrete example from \S\ref{sec:4.2}, again with $\eps=0.05$, since a general theory of it seems unreachable to us for the moment. In a numerical case study we shall analyze the FP-propagator norm $\|S(t,0)\|_{\mathcal{B}(V_0^{\perp})}$, as a function of time. In the past it would have been quite a challenge to compute (not just to estimate) this norm. But due to Theorem \ref{theo:t-dep} this has become easy for FP-equations with linear drift.
}

\begin{figure}[htbp]
\begin{center}
\includegraphics[scale=0.42]{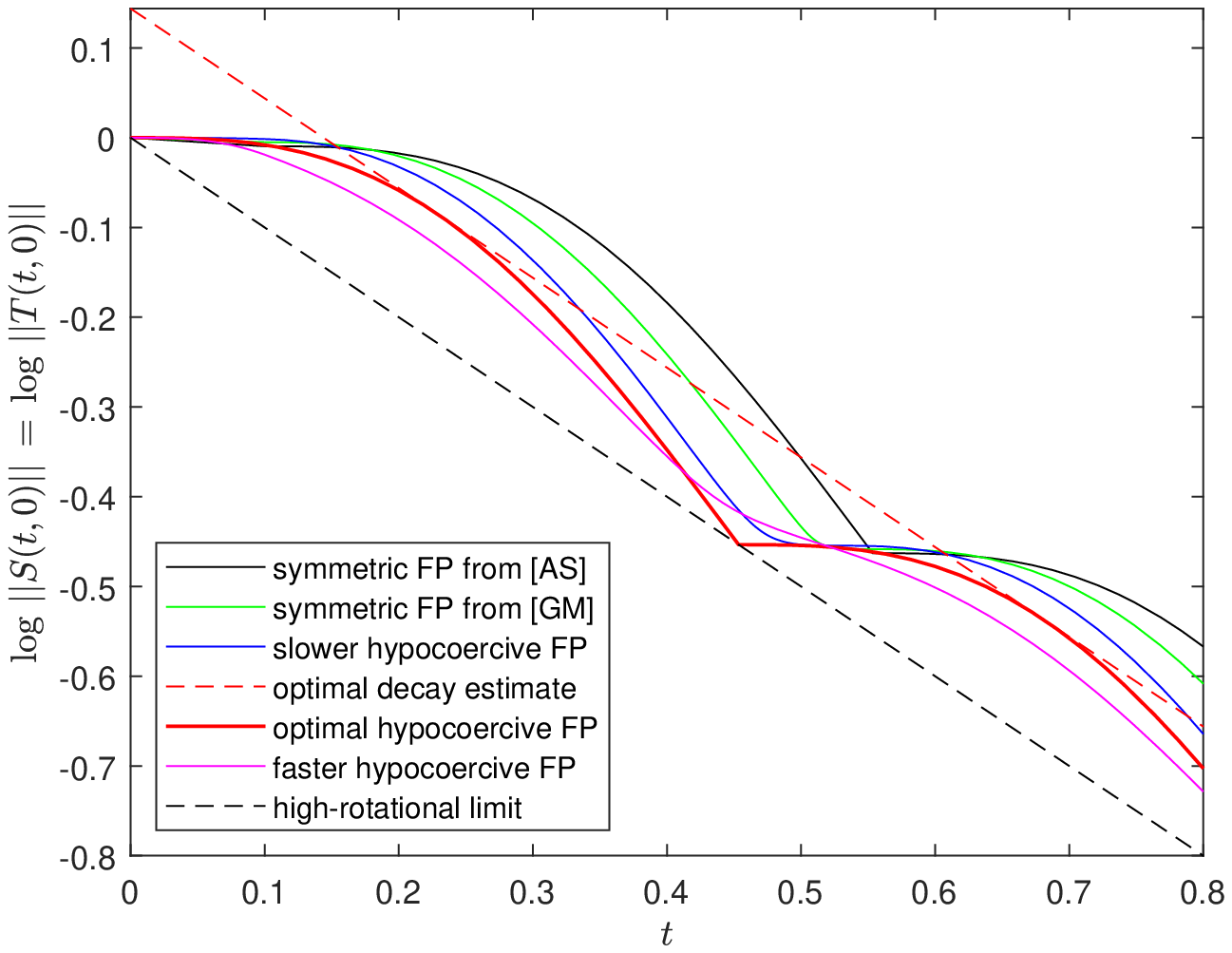}
\hfill
\includegraphics[scale=0.42]{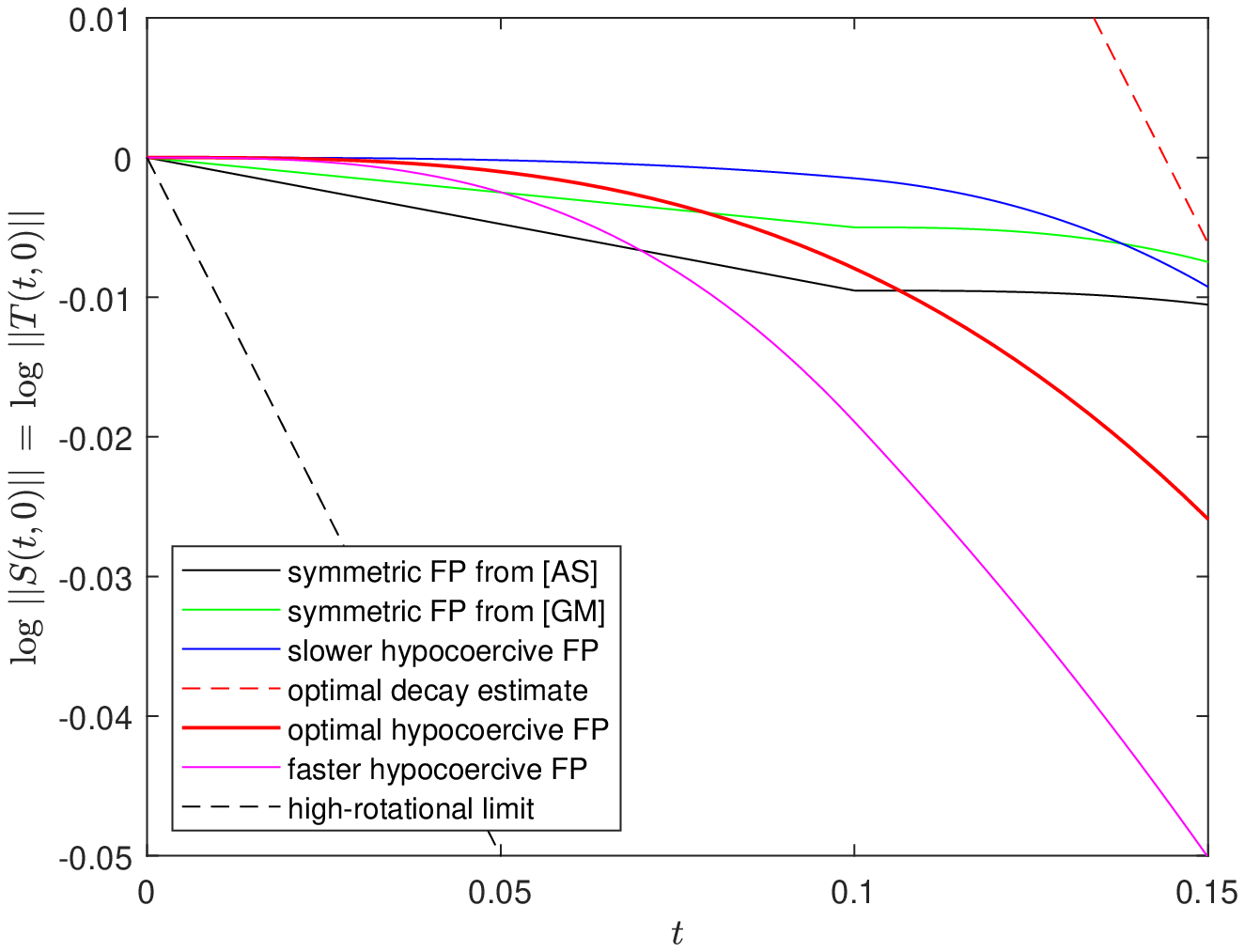}
\end{center}
\caption{\bea{Left: For $c=\sqrt{4/3}$, the FP- and ODE-propagator norms are given for hypocoercive FP-equations with piecewise constant coefficients, using 5 different values on $0\le t\le0.1$: The solid red curve corresponds to the optimal, constant matrices from Theorem \ref{theo:maintheo}(a) as reference case, and the dashed red curve is the corresponding decay estimate \eqref{eq:mainIn}. The initially symmetric FP-equations from \eqref{eq:FPopt} and \eqref{splitFP} are given by the black and green solid curves, respectively. Hypocoercive FP-equations with slower and faster rotational drift are represented, respectively, by the blue and magenta solid curves. \\
Right: a zoom of the plot, close to $t=0$. Colors only online.}  }
\label{fig3}
\end{figure}

\bea{
In Figure \ref{fig3} we shall compare the decay of the FP- and corresponding ODE-propagator norms for 5 cases of FP-equations with piecewise constant coefficient matrices, as in \eqref{splitFP}:
$$
  S(t,0)= \begin{cases}
e^{-L_{C_i,D_i}t} & 0\le t\le t_0 \\
e^{-L_{C_1,D_1}(t-t_0)} e^{-L_{C_i,D_i}t_0} & t> t_0 
\end{cases} ,\quad
  T(t,0)= \begin{cases}
e^{-\widetilde C_i t} & 0\le t\le t_0 \\
e^{-\widetilde C_1 (t-t_0)} e^{-\widetilde C_it_0} & t> t_0 
\end{cases} .
$$
Choosing $t_0=0.1$, we use on the interval $(t_0,\infty)$ always the same matrices, namely those from \eqref{opt-matrices} with $\mu=7$, which is the optimal hypocoercive FP-evolution from Theorem \ref{theo:maintheo} for the multiplicative constant $c=\sqrt{4/3}$. For the interval $[0,t_0]$ we shall compare the following cases:
\begin{enumerate}[(FP1)]
\item \label{FP1} This reference case uses the same coefficients as for $t>t_0$, i.e.:
\begin{equation}\label{ref-matrices}
  D_1=\diag(0,2),\quad C_1= \begin{pmatrix}
0 & -\frac{7}{\sqrt{\eps}}
\\
7\sqrt{\eps}  & 2 
\end{pmatrix}, \quad 
\widetilde{C}_1=\begin{pmatrix}
0 & -7
\\
 7 & 2
\end{pmatrix}.
\end{equation}
Figure \ref{fig3} also includes the sharp upper and lower envelops of the resulting propagator norm (as function of $t$).
\item \label{FP2} The symmetric FP-equation from \cite{GM}, and shown in \eqref{splitFP} is determined by the matrices
$$
  D_2=I_2,\quad C_2=\widetilde{C}_2= \diag(\eps, 1).
$$
\item \label{FP3} The symmetric FP-equation \eqref{eq:FPopt} with maximum initial decay is determined by the matrices
$$
  D_3=\frac{2\eps}{1+\eps}\diag(\frac{1}{\eps},1),\quad C_3=\widetilde{C}_3=\frac{2\eps}{1+\eps} I_2.
$$
\item \label{FP4} A hypocoercive FP-equation with slower rotational part than in \eqref{ref-matrices} is determined by the matrices
$$
  D_4=\diag(0,2),\quad C_4= \begin{pmatrix}
0 & -\frac{3}{\sqrt{\eps}}
\\
3\sqrt{\eps}  & 2 
\end{pmatrix}, \quad 
\widetilde{C}_4=\begin{pmatrix}
0 & -3
\\
 3 & 2
\end{pmatrix}.
$$
\item \label{FP5} A hypocoercive FP-equation with faster rotational part than in \eqref{ref-matrices} is determined by the matrices
$$
  D_5=\diag(0,2),\quad C_5= \begin{pmatrix}
0 & -\frac{11}{\sqrt{\eps}}
\\
11\sqrt{\eps}  & 2 
\end{pmatrix}, \quad 
\widetilde{C}_5=\begin{pmatrix}
0 & -11
\\
 11 & 2
\end{pmatrix}.
$$
Note that (FP\ref{FP4}) and (FP\ref{FP5}) are both of the form \eqref{opt-matrices}.
\end{enumerate}
}

\bea{
First we need to fix the criterion for comparing these 5 FP-equations with split coefficients. As one sees from Figure \ref{fig3}, adapting the FP-equation only on the initial time interval $[0,t_0]$ has a highly nonlocal-in-$t$ effect. Hence, it does not make sense to compare the norm-curves pointwise in time. Following the paradigm of \S\ref{sec3}, it is appropriate to compare again the corresponding best exponential decay estimates \eqref{eq:mainIn}. Since all compared FP-equations coincide for large time, or more precisely on $(t_0,\infty)$, their exponential decay rate is the same, and it suffices to compare the multiplicative constant of the (sharp) decay estimates. \\
\indent
Now we shall replace in the reference FP-equation (FP\ref{FP1}) the initial phase by a symmetric evolution:  With both options (FP\ref{FP2}) and (FP\ref{FP3}) the propagator norm decays initially faster than for the reference FP-equation (see Figure \ref{fig3}, right), but this backfires at later times: In both cases the upper envelop for the whole norm-function on $[0,\infty)$ and hence the multiplicative constant $c$ is larger than for the reference case (FP\ref{FP1}) (see Figure \ref{fig3}, left).\\
\indent
Finally we shall replace in the reference FP-equation (FP\ref{FP1}) the initial phase by a hypocoercive evolution having an anti-symmetric part of $\widetilde C$ that differs from case (FP\ref{FP1}). With the slower rotational part in case (FP\ref{FP4}) the multiplicative constant $c$ is increased (see Figure \ref{fig3}, left), but when using initially the faster rotational part from case (FP\ref{FP5}), the multiplicative constant $c$ is decreased!}

\begin{figure}[htbp]
\begin{center}
\includegraphics[scale=0.7]{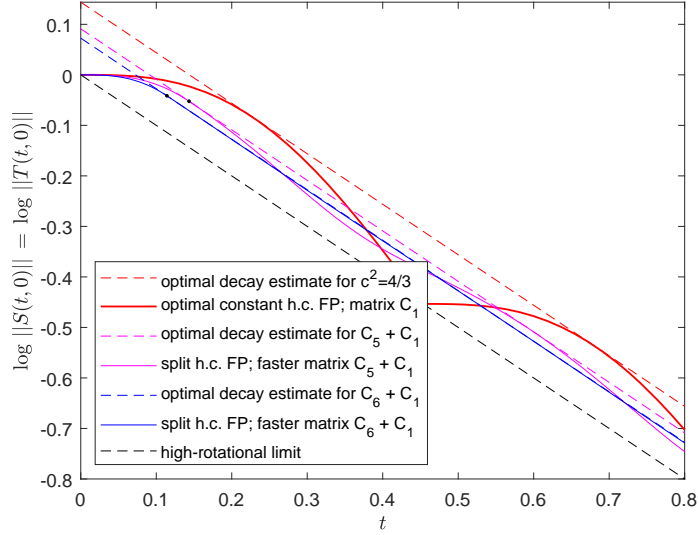}
\end{center}
\caption{\bea{For $c=\sqrt{4/3}$, the FP- and ODE-propagator norms are given for hypocoercive (h.c.) FP-equations with piecewise constant coefficients, using 3 different values on $0\le t\le t_0$: The solid red curve corresponds to the optimal, constant matrices from Theorem \ref{theo:maintheo}(a) as reference case. Hypocoercive FP-equations with the faster rotational drift matrices (FP\ref{FP5}), (FP6) 
are represented by the magenta and blue solid curves, respectively. The dashed curves are the corresponding decay estimate \eqref{eq:mainIn}. The discontinuity points $t_0$ of the coefficient matrices are marked with black dots.}  }
\label{fig4}
\end{figure}

\bea{
While we present in Figure \ref{fig3} the plots only for $t_0=0.1$, the results for other values of $t_0>0$ are qualitatively the same.
Choosing $t_0\approx0.1434$ (i.e.\ the first point of tangency between $\|e^{-\widetilde C_5t}\|_{\mathcal{B}(\R^d)}$ and its sharp exponential decay estimate $\sqrt{6/5}e^{-t}$, see Figure \ref{fig4}
) in the split case (FP\ref{FP5}) reduces the multiplicative constant to $c=\sqrt{6/5}$. Note that this is also the sharp constant for the {\it non-split} FP-equation involving the matrices $(C_5,\,D_5)$. This means that the same decay quality (in the above defined sense) can be obtained with the constant coefficient matrices $(C_5,\,D_5)$ for all time or just a short initial layer with $(C_5,\,D_5)$ and then evolving with $(C_1,\,D_1)$ for $t>t_0$. 
The multiplicative constant can be reduced even further, e.g.\ with the following choice of matrices on the interval $[0,0.11413]$ (see Figure \ref{fig4}):
\begin{enumerate}[(FP6)]
\item \label{FP1a} $$
  D_6=\diag(0,2),\quad C_6= \begin{pmatrix}
0 & -\frac{13.8}{\sqrt{\eps}}
\\
13.8\sqrt{\eps}  & 2 
\end{pmatrix}, \quad 
\widetilde{C}_6=\begin{pmatrix}
0 & -13.8
\\
 13.8 & 2
\end{pmatrix}
$$
\end{enumerate}
This example of time-dependent FP-coefficients is also algorithmically relevant, since $\|C_1\|_\F < \|C_6\|_\F$. Hence, longer time steps could be used in a discretization of the split FP-equation for $t>t_0$.
}


\section{Conclusion}\label{sec:5}
For any given anisotropic Gaussian steady state \eqref{eq:finf} with covariance matrix $K$, we analyzed the construction of non-symmetric FP-equations \eqref{eq:FP} that show fastest decay towards the unique normalized steady state $f_{\infty,K}$. Building upon preceding results (in particular \cite{LNP, GM}) we proved that optimal exponential decay with small multiplicative constants (as in \eqref{eq:hypowithJ}, and uniformly in $f_0$) can be achieved with a single FP-equation, without having to split off an initial evolution phase. Thereby, the maximum decay rate $\lambda_{opt}=\max(\sigma(K^{-1}))$, and the infimum of the multiplicative constants $c_{inf}=1$. By contrast, the best multiplicative constant obtainable in \cite{GM} was bounded below by $\sqrt{\kappa(K)\,e}$. Hence, the gain provided here for the multiplicative constant is particularly important when $\kappa(K)$ is large, i.e.\ when the original, symmetric FP-dynamics includes very different time scales due to very different eigenvalues in $K$.

More precisely, for any given multiplicative constant $c>1$ we were able to construct explicitly a non-symmetric FP-equation of form \eqref{eq:FP} with constant drift matrix $C_{opt}(c)$ and diffusion matrix $D_{opt}(c)$ such that the exponential decay estimate \eqref{eq:hypowithJ} holds with the parameters $(\lambda_{opt},\,c)$. For given $c$ and variable space dimension $d$, we were able to reduce the growth estimate on these drift matrices to $\mathcal O(d^{3/2})$, down from $\mathcal O(d^2)$ given in \cite{GM}.

In explicit 2D examples we illustrated, \bea{both analytically and numerically,} that the infimum of the multiplicative constant, $c_{inf}=1$ corresponds to the limit of adding a highly rotational, non-symmetric drift to the original FP-equation.

\bea{To round off our analysis we presented a numerical case study on a FP-equa\-tion in 2D with piecewise constant coefficient matrices. This showed two unexpected phenomena: 
First, no symmetric FP-evolution on an initial time layer was able to improve the overall decay behavior; in fact it always got worse than in the time-independent case.
Second, replacing on an initial time layer the non-sym\-met\-ric drift by a higher rotational one (and then returning to the original drift for all time) can reduce the multiplicative constant for the whole evolution to a level that pertains to a ``larger'' drift matrix $C$.
}

\bea{
\appendix
\section{Proof of Theorem \ref{theo:t-dep}}
\begin{proof}[Proof-idea]
First, the coordinate transformation $\tilde x:=K^{-1/2}x$ and $\tilde f(\tilde x):= (\det K)^{1/2}\, f(K^{1/2}\tilde x)$ transforms \eqref{eq:tFP} into the {\it normalized FP-equation}
\begin{equation}\label{eq:ntFP}
 \partial_{t} \tilde f_t = -\widetilde L(t)\tilde f_t := \mathrm{div}_{\tilde x}{(\widetilde D(t)\nabla_{\tilde x} \tilde f_t+\widetilde C(t)\tilde x\tilde f_t)}, \qquad \tilde x \in \R^d, \ t \in (0,\infty),
\end{equation}
where $\widetilde D(t):=K^{-1/2}D(t)K^{-1/2}$. This FP-equation is naturally considered in $\widetilde \H:=L^2(\R^d,\tilde f_\infty^{-1})$, and the (transformed) steady state is
$$
  \tilde f_\infty(\tilde x)=(2\pi)^{-d/2}\,e^{-|\tilde x|^2/2}.
$$
This transformation preserves the norm of the solution: $\|f_t\|_{\H}=\|\tilde f_t\|_{\widetilde \H},\:t\ge0$. Hence the propagator norms of \eqref{eq:tFP} and \eqref{eq:ntFP} coincide: $\left\|S(t_2,t_1)\right\|_{\mathcal{B}(V_0^{\perp})} =
\left\|\widetilde S(t_2,t_1)\right\|_{\mathcal{B}(\widetilde V_0^{\perp})}$.\\
\indent
Next, one decomposes $\widetilde\H$ into mutually orthogonal subspaces $\widetilde V^{(m)},\:m\in\N_0$, which are each invariant under the operators $\widetilde L(t)$ $\forall t\ge0$:
$$
  \widetilde \H=\bigoplus_{m\in\N_0}{\!\!\!}^\perp \,\widetilde V^{(m)},
$$
with
$$
  \widetilde V^{(m)}:=\span\{g_\alpha(\tilde x):=(-1)^{|\alpha|}\nabla^\alpha \tilde f_\infty(\tilde x)\,:\, \alpha\in\N_0^d,\,|\alpha|=m\}.
$$
Decomposing the solution of \eqref{eq:ntFP} into these subspaces as
$$
  \tilde f_t(\tilde x)=\sum_{\alpha\in\N_0^d} \tilde d_\alpha(t)\,\frac{g_\alpha(\tilde x)}{\|g_\alpha\|_{\widetilde\H}},
$$
yields the estimates
$$
  \sum_{|\alpha|=m} |\tilde d_\alpha(t_2)|^2 \le h(t_2,t_1)^{2m} 
  \left( \sum_{|\alpha|=m} |\tilde d_\alpha(t_1)|^2 \right),\quad
  0\le t_1\le t_2<\infty,\:m\in\N,
$$
with
$$
  h(t_2,t_1) := \|T(t_2,t_1)\|_{\mathcal{B}(\R^d)} \le 1,\quad
  0\le t_1\le t_2<\infty.
$$
On the one hand this shows that
$$
  \|\tilde f_{t_2}-\tilde f_\infty\|_{\widetilde \H}
  =\|\widetilde S(t_2,t_1)(\tilde f_{t_1}-\tilde f_\infty)\|_{\widetilde \H} \le \|T(t_2,t_1)\|_{\mathcal{B}(\R^d)} \|\tilde f_{t_1}-\tilde f_\infty\|_{\widetilde \H},\quad
  0\le t_1\le t_2<\infty.
$$
On the other hand we can use initial conditions $\tilde f_{t_1}\in \widetilde V^{(1)}$, noting as in \cite[\S4.2]{ASS} that the coefficient vector $\tilde d^{(1)}(t):=\left(\tilde d_\alpha(t)\right)_{|\alpha|=1}\in\R^d$ evolves according to
$$
  \frac{d}{dt} \tilde d^{(1)} = -\widetilde C(t)\,\tilde d^{(1)},
$$
i.e.\ the drift ODE of the FP-equation.
This implies the reverse inequality
$$
  \left\|\widetilde S(t_2,t_1)\right\|_{\mathcal{B}(\widetilde V_0^{\perp})} \ge \left\|T(t_2,t_1)\right\|_{\mathcal{B}(\R^d)}, \qquad \forall 0\le t_1\le t_2<\infty,
$$
and hence the equality \eqref{prop-equal-t} follows.
\end{proof}
}


\section*{Acknowledgement}
The authors were partially supported by the FWF (Austrian Science Fund) funded SFB \#F65 and the FWF-doctoral school W 1245. We acknowledge fruitful discussions with Jos\'e A. Carrillo, who originally proposed this topic to us. We are also grateful to the anonymous referees, whose suggestions helped to improve this work.

\end{document}